\documentclass[11pt,a4paper]{amsart}
\usepackage{amsmath,amssymb,amsthm}
\usepackage[margin=2.6cm]{geometry}
\usepackage{booktabs}
\usepackage{hyperref}
\usepackage{longtable}

\newtheorem{theorem}{Theorem}[section]

\newtheorem{proposition}[theorem]{Proposition}

\theoremstyle{definition}

\newtheorem{remark}[theorem]{Remark}

\newcommand{\C}{\mathbb{C}}
\newcommand{\Der}{\operatorname{Der}}
\newcommand{\Inn}{\operatorname{Inn}}
\newcommand{\QDer}{\operatorname{QDer}}
\newcommand{\GDer}{\operatorname{GDer}}
\newcommand{\TDer}{\operatorname{TDer}}
\newcommand{\Aut}{\operatorname{Aut}}
\newcommand{\End}{\operatorname{End}}
\newcommand{\Hom}{\operatorname{Hom}}
\newcommand{\Ann}{\operatorname{Ann}}
\newcommand{\Leib}{\operatorname{Leib}}
\newcommand{\id}{\mathrm{id}}
\newcommand{\HL}{\mathrm{HL}}
\newcommand{\n}{\mathfrak{n}}

\title[Derivations, automorphisms and deformations of a Leibniz algebra]{An eight-dimensional nilpotent Leibniz algebra without outer derivations}

\author{Kobiljon Abdurasulov}
\address{[Kobiljon Abdurasulov] University of Beira Interior, Rua Marquês D'Ávila e Bolama, 6201-001, Covilhã, Portugal}
\email{abdurasulov0505@mail.ru}

\author{Jobir Adashev}
\address{[Jobir Adashev] Institute of Mathematics, Uzbekistan Academy of Sciences, 9 University Street, 100174, Tashkent, Uzbekistan}
\email{adashevjq@mail.ru, adashevjq@gmail.com}
\date{}

\begin{document}

\begin{abstract}
Jacobson's classical theorem asserts that every nilpotent Lie algebra admits a non-inner derivation. This result, however, does not extend to Leibniz algebras: there exists an eight-dimensional complex nilpotent Leibniz algebra whose every derivation is inner. In this paper, we present a systematic investigation of this algebra, with particular emphasis on the absence of outer derivations and its structural properties. We describe its lower central series, annihilators, center, and Leibniz kernel, and prove that its derivation algebra is a six-dimensional nilpotent Lie algebra isomorphic to the quotient of the algebra by its Leibniz kernel. We further show that its automorphism group is a connected six-dimensional unipotent group obtained by exponentiating inner derivations; in particular, it contains no nontrivial elements of finite order. In addition, we provide a complete classification of the quasiderivation, generalized derivation, and $\sigma$-derivation of the algebra. Finally, we show that, among the four linearly independent infinitesimal deformations, exactly one is integrable, giving rise to a one-parameter family of pairwise non-isomorphic algebras which, with a single exception, also admit only inner derivations.
\end{abstract}

\subjclass[2020]{17A32, 17A36, 17B30.}
\keywords{nilpotent Leibniz algebra; inner derivation; automorphism group; quasiderivation; infinitesimal deformation.}

\maketitle

\setcounter{tocdepth}{2}
\tableofcontents

\section{Introduction}

Leibniz algebras were introduced by Bloh~\cite{Bloh} under the name of $D$-algebras and also by Loday~\cite{Loday} as the natural non-antisymmetric analogues of Lie algebras. A (right) Leibniz algebra is a vector space with a bilinear product for which every right multiplication is a derivation; a Lie algebra is precisely a Leibniz algebra in which the square of every element vanishes. Since the work of Loday and Pirashvili~\cite{LodayPirashvili} on universal enveloping algebras and (co)homology, the theory has developed in many directions: analogues of classical structure theorems such as Levi's theorem have been established~\cite{Barnes}, and the cohomology and deformation theory of Leibniz algebras has been developed~\cite{FialowskiMandalMukherjee}. A recurring theme is the comparison with Lie algebras: which Lie-theoretic results survive the loss of antisymmetry, and which fail? The present paper is a contribution to this comparison.

In a Leibniz algebra, the right multiplications $R_x\colon y\mapsto yx$ are derivations by definition; they are called \emph{inner}, and the quotient of all derivations by the inner ones is the first cohomology of the algebra with coefficients in itself. Jacobson proved~\cite{Jacobson} that every nilpotent Lie algebra of positive dimension possesses an outer derivation. This holds even for the \emph{characteristically nilpotent} Lie algebras of Dixmier and Lister~\cite{DixmierLister}, all of whose derivations are nilpotent. For Leibniz algebras the analogue of Jacobson's theorem fails: there is an explicit eight-dimensional nilpotent right Leibniz algebra $L$ whose derivations are all inner; moreover $\Der(L)=\Inn(L)$ is six-dimensional, $L$ is characteristically nilpotent.

The aim of this paper is to study an algebra usually understood through its automorphisms, its derivations and their relaxations (quasiderivations, generalized derivations, twisted derivations) and its deformations. Each of these is related to derivations in a specific way, and the guiding question of this paper is: \emph{how does the rigidity expressed by the absence of outer derivations propagate through these families?} All of them are actively studied for Lie and Leibniz algebras. Generalized derivations and quasiderivations go back to Leger and Luks~\cite{LegerLuks}, the related ternary derivations were studied by Jim\'enez-Gestal and P\'erez-Izquierdo~\cite{JGPI}, and $\sigma$-derivations underlie the quasi-Lie and Hom-Lie deformations of Hartwig, Larsson and Silvestrov~\cite{HLS}. Deformations of algebras were introduced by Gerstenhaber~\cite{Gerstenhaber} and of Lie algebras by Nijenhuis and Richardson~\cite{NR}; the Leibniz case was developed by Fialowski, Mandal and Mukherjee~\cite{FialowskiMandalMukherjee}.

\textbf{Main results.} Throughout, $L$ is the complex Leibniz algebra with basis $e_1,\dots,e_8$ and multiplication table~\eqref{eq:table}. In Section~\ref{sec:structure} show that $L$ is generated by two elements and is nilpotent of class $5$; its right annihilator coincides with its Leibniz kernel $\langle e_7,e_8\rangle$, its centre and left annihilator equal $\langle e_8\rangle$, and the quotient by the Leibniz kernel is the six-dimensional nilpotent Lie algebra $\n_6$. A one-parameter family $L_t$ of deformations of $L$ consists, for $t\ne-1$, of pairwise non-isomorphic nilpotent Leibniz algebras without outer derivations (Theorem~\ref{thm:Lt}), so $L$ is not an isolated example. In sections~\ref{sec:der} and~\ref{sec:aut} proved that $\Der(L)=\Inn(L)\cong\n_6$, and $\Aut(L)=\{\exp R_a: a\in L\}$ is a six-dimensional connected unipotent group isomorphic to the simply connected nilpotent Lie group with Lie algebra $\n_6$. Every automorphism induces the identity on $L/L^2$, and there are no non-trivial automorphisms of finite order. In Sections~\ref{sec:quasi} and~\ref{sec:sigma} sections, we classify quasiderivations and generalized derivations, obtaining spaces of dimensions $13$ and $22$, respectively. Moreover, all $\sigma$-derivations are characterized as maps of the form $\sigma\circ R_a$. Finally, in section~\ref{sec:def}, we show that among the four-dimensional space of infinitesimal deformations, a two-dimensional subspace is unobstructed at second order; one direction is obstructed at order four, while the other integrates to the family $L_t$. Hence, $L$ is not rigid.

\section{Preliminaries}

We fix notation and recall the definitions concerning Leibniz algebras, operators on algebras, cohomology and deformations that will be used in this paper. All vector spaces are over the field $\C$ of complex numbers unless stated otherwise.

A \emph{right Leibniz algebra} is a vector space $A$ with a bilinear product $(x,y)\mapsto xy$ satisfying
\begin{equation}\label{eq:leibniz}
(xy)z=x(yz)+(xz)y\qquad(x,y,z\in A).
\end{equation}
For $x\in A$ we write $R_x(y)=yx$ and $L_x(y)=xy$. Identity~\eqref{eq:leibniz} says that every $R_x$ is a derivation. The \emph{Leibniz kernel} is $\Leib(A)=\operatorname{span}\{xx:x\in A\}$; it is an ideal, $A/\Leib(A)$ is a Lie algebra, and $A$ is a Lie algebra if and only if $\Leib(A)=0$. The right and left annihilators are $\Ann_r(A)=\{x: Ax=0\}$ and $\Ann_l(A)=\{x:xA=0\}$, and the centre is $Z(A)=\Ann_r(A)\cap\Ann_l(A)$. A subspace $I$ is an \emph{ideal} if $AI+IA\subseteq I$.

The lower central series is $A^1=A$, $A^{k+1}=A^kA$; $A$ is \emph{nilpotent} if $A^{s}=0$ for some $s$, and its nilpotency class is the least $c$ with $A^{c+1}=0$.

Now we define the following classes of linear operators. They all arise by modifying the derivation identity $f(xy)=f(x)y+xf(y)$ in one of several ways: by allowing different operators in its three positions (generalized and quasiderivations) or by twisting with an endomorphism ($\sigma$-derivations). Let $A$ be an algebra and $f,f',f''\in\End(A)$.
\begin{itemize}
\item $f$ is a \emph{derivation} if $f(xy)=f(x)y+xf(y)$; $\Der(A)$ is a Lie algebra under the commutator. A nilpotent Leibniz algebra is \emph{characteristically nilpotent} if all its derivations are nilpotent.
\item $(f'',f,f')$ is a \emph{ternary derivation} if $f''(xy)=f(x)y+xf'(y)$ for all $x,y$; we denote the space of ternary derivations by $\TDer(A)$~\cite{JGPI}. 
\item $f$ is a \emph{generalized derivation} if there are $f',f''$ with $(f'',f,f')\in\TDer(A)$, and a \emph{quasiderivation} if there is $f''$ with $(f'',f,f)\in\TDer(A)$~\cite{LegerLuks}. We write $\GDer(A)$ and $\QDer(A)$; then $\Der(A)\subseteq\QDer(A)\subseteq\GDer(A)$.
\item For $\sigma\in\End(A)$, a \emph{$\sigma$-derivation} is a linear map $d$ with $d(xy)=d(x)\sigma(y)+\sigma(x)d(y)$~\cite{HLS}.
\end{itemize}

\textbf{Cohomology and deformations.} Deformation theory describes the algebras which are ``infinitesimally close'' to a given one. Its basic principle, due to Gerstenhaber~\cite{Gerstenhaber}, is that first-order deformations are classified by the second cohomology with adjoint coefficients, and that the extension of a deformation to higher orders is controlled by obstructions in the third cohomology. We recall the relevant definitions in the Leibniz setting.
For a right Leibniz algebra $(A,\mu)$, $\mu(x,y)=xy$, and bilinear maps $\varphi,\psi\colon A\times A\to A$ define the trilinear map
\[
(\varphi\circ\psi)(x,y,z)=\varphi(\psi(x,y),z)-\varphi(x,\psi(y,z))-\varphi(\psi(x,z),y).
\]
Thus $\mu$ is a Leibniz product if and only if $\mu\circ\mu=0$. The second Leibniz coboundary of a $2$-cochain $\psi$ (with adjoint coefficients) is, up to sign, $\delta\psi=\mu\circ\psi+\psi\circ\mu$; the $2$-cocycles $Z^2(A,A)$ are the $\psi$ with $\delta\psi=0$, the $2$-coboundaries are $B^2(A,A)=\{\delta f: f\in\End(A)\}$, where $(\delta f)(x,y)=xf(y)+f(x)y-f(xy)$, and $\HL^2(A,A)=Z^2/B^2$ \cite{LodayPirashvili}. A \emph{formal deformation} of $A$ is a formal power series $\mu_t=\mu+\sum_{i\ge1}t^i\varphi_i$ of bilinear maps with $\mu_t\circ\mu_t=0$; two deformations are \emph{equivalent} if they are related by a formal change of basis $\Phi_t=\id+\sum_{i\ge1}t^if_i$. Comparing coefficients of $t^n$ in $\mu_t\circ\mu_t=0$ gives
\begin{equation}\label{eq:defo}
\delta\varphi_n=-\sum_{i+j=n,\ i,j\ge1}\varphi_i\circ\varphi_j .
\end{equation}
For $n=1$ the \emph{infinitesimal part} $\varphi_1$ is a $2$-cocycle, and its class in $\HL^2(A,A)$ depends only on the equivalence class of the deformation \cite{FialowskiMandalMukherjee,Gerstenhaber,NR}. A truncated deformation $\mu+\sum_{i<n}t^i\varphi_i$ extends to order $n$ if and only if the right-hand side of~\eqref{eq:defo} lies in the image of $\delta$ on $2$-cochains; otherwise we say that it is \emph{obstructed at order $n$}.

We now introduce the Leibniz algebra that is the subject of this paper. Its multiplication table is generated by $e_1,e_2$, and the remaining basis vectors are iterated products of these generators. Let $L$ be the complex vector space with basis $e_1,\dots,e_8$ and product determined by
\begin{equation}\label{eq:table}
\begin{array}{llll}
e_1e_1=4e_8, & e_1e_2=e_3, & e_1e_3=e_4, & e_1e_4=e_6,\\
e_1e_5=6e_8, & e_1e_6=e_8, & e_2e_1=-e_3-9e_7+e_8, & e_2e_2=-6e_7-e_8,\\
e_2e_3=e_5, & e_2e_4=3e_7, & e_2e_5=e_8, & e_2e_6=2e_8,\\
e_3e_1=-e_4, & e_3e_2=-e_5-6e_8, & e_3e_3=-e_7+2e_8, & e_3e_4=e_8,\\
e_4e_1=-e_6, & e_4e_2=-e_7-4e_8, & e_4e_3=-2e_8, & e_5e_1=-2e_7-2e_8,\\
e_5e_2=-e_8, & e_6e_1=-e_8, & e_6e_2=e_8, & e_7e_1=-e_8,
\end{array}
\end{equation}
all other products of basis vectors being zero. For $x=\sum x_ie_i$, $y=\sum y_ie_i$ the coordinates of $xy$ are
\begin{equation}\label{eq:coords}
\begin{aligned}
(xy)_1&=(xy)_2=0,\quad (xy)_3=x_1y_2-x_2y_1,\quad (xy)_4=x_1y_3-x_3y_1,\\
(xy)_5&=x_2y_3-x_3y_2,\quad (xy)_6=x_1y_4-x_4y_1,\\
(xy)_7&=-9x_2y_1-6x_2y_2+3x_2y_4-x_3y_3-x_4y_2-2x_5y_1,\\
(xy)_8&=4x_1y_1+6x_1y_5+x_1y_6+x_2y_1-x_2y_2+x_2y_5+2x_2y_6-6x_3y_2+2x_3y_3\\
&\quad+x_3y_4-4x_4y_2-2x_4y_3-2x_5y_1-x_5y_2-x_6y_1+x_6y_2-x_7y_1.
\end{aligned}
\end{equation}
We collect the known basic facts about $L$ that will be used; for completeness we indicate how they are obtained. Part (b) is the property which motivates this paper; part (c) provides the cohomological input for the deformation theory of Section~\ref{sec:def}.

\begin{theorem}\label{thm:pre} Let \(L\) be an eight-dimensional Leibniz algebra whose multiplication is given by (\ref{eq:table}). Then
\begin{enumerate}
\item[(a)] Every derivation of $L$ is inner: $\Der(L)=\{R_a:a\in L\}$, and $\dim\Der(L)=6$.
\item[(b)] $\dim\HL^2(L,\C)=7$ and $\dim\HL^2(L,L)=4$; more precisely $\dim Z^2(L,L)=62$, $\dim B^2(L,L)=58$, and the classes of four explicit integral cocycles $\psi_1,\psi_2,\psi_3,\psi_4$ form a basis of $\HL^2(L,L)$, where $\psi_4(x,y)=\chi(x,y)e_8$ with
\begin{equation}\label{eq:chi}
\chi(x,y)=-6x_3y_2+3x_3y_4-4x_4y_3+3x_6y_2-x_7y_1 .
\end{equation}
\end{enumerate}
\end{theorem}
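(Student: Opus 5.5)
The statement is a finite linear-algebra computation, and the plan is to organise it so that each part reduces to computing the rank of an explicit integer matrix.

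\textbf{Part (a).} Write a derivation as $d(e_j)=\sum_i d_{ij}e_i$ and impose $d(e_ie_j)=d(e_i)e_j+e_id(e_j)$ for all $64$ pairs $(i,j)$. This gives a homogeneous linear system in the $64$ unknowns $d_{ij}$. Since $e_1,e_2$ generate $L$, the images $d(e_1),d(e_2)$ (sixteen parameters) determine $d$. Concretely, $d(e_3)=d(e_1)e_2+e_1d(e_2)$, and similarly $d(e_4),d(e_5),d(e_6)$ are obtained from the products $e_1e_3$, $e_2e_3$, $e_1e_4$. For $e_7$ and $e_8$ one uses relations such as $e_3e_3=-e_7+2e_8$ and $e_1e_6=e_8$.

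After this substitution, the remaining identities become linear constraints on the sixteen coordinates of $d(e_1),d(e_2)$. I would work through them by increasing degree in the grading $\deg e_1=\deg e_2=1$, $\deg e_3=2$, and so on. The expected outcome is that the constraints force $d(e_1),d(e_2)\in L^2$, which kills the diagonal and degree-zero parts. They should then cut the remaining freedom down to a six-dimensional space.

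Next I would compute $R_a$ for $a=\sum a_ie_i$. Since $Le_7=Le_8=0$ (the Leibniz kernel lies in $\Ann_r$), we have $R_{e_7}=R_{e_8}=0$. The claim is that $R_{e_1},\dots,R_{e_6}$ are linearly independent, which can be checked by evaluating them on $e_1$ and $e_2$. Hence $\dim\Inn(L)=6$. Because $\Inn(L)\subseteq\Der(L)$ always holds, matching dimensions gives equality.

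\textbf{Part (b), adjoint coefficients.} The $2$-cochains form a space of dimension $8^3=512$. The cocycle condition $\mu\circ\psi+\psi\circ\mu=0$ is a linear system of $8^4$ equations. I would compute its rank by exact integer linear algebra, preferably by computer, to get $\dim Z^2=62$.

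For coboundaries, $\delta\colon\End(L)\to C^2$ has kernel $\Der(L)$. By (a) this kernel is six-dimensional, so $\dim B^2=64-6=58$ with no further work. This gives $\dim\HL^2(L,L)=4$.

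To exhibit a basis, it suffices to verify three things:
1. each $\psi_k$ is a cocycle;
2. no nontrivial combination of the $\psi_k$ lies in $B^2$;
3. together these give the basis.

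For $\psi_4=\chi(\cdot,\cdot)e_8$ the cocycle check is easy, because $e_8$ is central. All terms of the form $\mu(\psi(x,y),z)$ or $\mu(x,\psi(y,z))$ vanish, and the condition reduces to
\[
\chi(xy,z)-\chi(x,yz)-\chi(xz,y)=0.
\]
Independence modulo $B^2$ can be tested by appending the four cocycles to the $64$ spanning vectors $\delta(E_{ij})$ and checking that the rank rises from $58$ to $62$.

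\textbf{Part (b), trivial coefficients.} $Z^2(L,\C)$ consists of bilinear forms $\omega$ with
\[
\omega(xy,z)-\omega(x,yz)-\omega(xz,y)=0,
\]
a $64$-unknown system. $B^2(L,\C)=\{f\circ\mu : f\in L^*\}$, whose dimension is $\dim L^2=6$, since $f\circ\mu=0$ if and only if $f$ vanishes on $L^2$. So the claim $\dim\HL^2(L,\C)=7$ amounts to showing $\dim Z^2(L,\C)=13$. Using the grading, the system splits into blocks, which makes the rank computation tractable.

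The main obstacle is the rank computation $\dim Z^2(L,L)=62$: it is a $4096\times512$ system that is not graded, since the table mixes degrees (for example $e_1e_1=4e_8$). I would first try to exploit the filtration by $L^k$ to block-triangularise the system. Failing that, I would rely on an exact rational rank computation and record the explicit cocycles so the reader can verify them independently.
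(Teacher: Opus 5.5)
Your proposal follows the paper's proof essentially verbatim. Both reduce (a) and (b) to exact rank computations of the explicit linear systems, read off $\dim B^2(L,L)=64-6=58$ from $\ker\delta|_{\End(L)}=\Der(L)$, and confirm that $[\psi_1],\dots,[\psi_4]$ form a basis by substitution and by the rank rising from $58$ to $62$. Your extra remarks (independence of $R_{e_1},\dots,R_{e_6}$ via evaluation on $e_1,e_2$, and reducing the $\psi_4$ check to the trivial-coefficient identity for $\chi$) are correct refinements. The only inaccuracy is terminological: $L$ has a weight filtration but no grading, so the systems are at best block-triangular rather than splitting into blocks, which affects bookkeeping but not the argument.
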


\begin{proof}  (a) Writing a linear map $D$ as an $8\times8$ matrix of unknowns, the derivation identity on pairs of basis vectors is a homogeneous linear system of $512$ equations in $64$ unknowns. Its solution space is six-dimensional and consists of the maps $R_a$. (b) The coboundary $f\mapsto\delta f$ on $\End(L)$ has kernel $\Der(L)$, so $\dim B^2(L,L)=64-6=58$, and the rank of the linear map $\psi\mapsto\delta\psi$ on the $512$-dimensional space of $2$-cochains gives $\dim Z^2(L,L)=62$. With trivial coefficients, the $2$-cocycles are the bilinear forms $\omega$ with $\omega(xy,z)=\omega(x,yz)+\omega(xz,y)$, and the $2$-coboundaries are the forms $(x,y)\mapsto f(xy)$ with $f\in L^*$; since $\dim L^2=6$, the coboundaries form a $6$-dimensional space, and solving the cocycle condition gives a $13$-dimensional space of cocycles, so $\dim\HL^2(L,\C)=7$. That $\psi_1,\dots,\psi_4$ are cocycles is checked by substitution, and their classes are independent modulo $B^2(L,L)$ by a rank computation. All of these computations were carried out with exact rational arithmetic. \end{proof}

We now give the cocycles of Theorem~\ref{thm:pre}(b) explicitly. All four have integer coefficients, and for each of them we list only the nonzero values on pairs of basis vectors; all other values $\psi_i(e_a,e_b)$, $1\le a,b\le8$, are zero, and $\psi_i$ is extended to $L\times L$ by bilinearity. The cocycle $\psi_1$, with $54$ nonzero coefficients, is defined by
\begin{longtable}{ll}
\( \psi_{1}(e_1,e_5)=306e_5,\)& \(\psi_{1}(e_1,e_6)=-255e_4+51e_5+768e_6, \) \\
\( \psi_{1}(e_2,e_3)=27e_3,\)& \(\psi_{1}(e_2,e_4)=27e_4+306e_5,\)\\ 
\(\psi_{1}(e_2,e_5)=51e_5, \) & \( \psi_{1}(e_2,e_6)=-153e_5+2304e_7,\)\\
\( \psi_{1}(e_3,e_2)=-27e_3-3120e_7-2364e_8, \) & \( \psi_{1}(e_3,e_3)=204e_7-1394e_8,\)\\
\( \psi_{1}(e_3,e_4)=153e_5+27e_6+1530e_8, \) & \( \psi_{1}(e_3,e_6)=255e_7+819e_8, \) \\
 \( \psi_{1}(e_4,e_2)=-27e_4-306e_5+204e_7-1394e_8, \) & \( \psi_{1}(e_4,e_3)=-153e_5-27e_6-768e_7-1224e_8, \) \\
\( \psi_{1}(e_4,e_4)=-51e_7-282e_8,\)& \( \psi_{1}(e_4,e_6)=510e_8, \) \\
\( \psi_{1}(e_5,e_1)=-306e_5-204e_7+1556e_8,\)& \( \psi_{1}(e_5,e_2)=-51e_5+270e_8, \) \\
\( \psi_{1}(e_5,e_3)=36e_7-72e_8,\)& \( \psi_{1}(e_5,e_4)=72e_8, \) \\
\( \psi_{1}(e_6,e_1)=255e_4-51e_5-768e_6,\)& \( \psi_{1}(e_6,e_2)=153e_5-306e_8, \) \\
\( \psi_{1}(e_6,e_3)=-51e_7-1227e_8,\)& \( \psi_{1}(e_6,e_4)=-561e_8, \) \\
\( \psi_{1}(e_7,e_1)=-564e_7-306e_8,\)& \( \psi_{1}(e_7,e_2)=-18e_7+36e_8, \) \\
\( \psi_{1}(e_7,e_3)=-18e_8,\)& \( \psi_{1}(e_8,e_1)=102e_7-204e_8, \) \\
\end{longtable}
the cocycle $\psi_2$, with $56$ nonzero coefficients, is defined by
\begin{longtable}{ll}
\( \psi_{2}(e_1,e_5)=-198e_5,\) & \( \psi_{2}(e_1,e_6)=165e_4-33e_5-489e_6, \) \\
\( \psi_{2}(e_2,e_4)=-198e_5,\) & \( \psi_{2}(e_2,e_5)=-33e_5+9e_6-162e_7, \) \\
\( \psi_{2}(e_2,e_6)=99e_5+18e_6-1494e_7,\) & \( \psi_{2}(e_3,e_1)=-108e_8, \) \\
\( \psi_{2}(e_3,e_2)=2130e_7+1293e_8,\) & \( \psi_{2}(e_3,e_3)=-132e_7+893e_8, \) \\
\( \psi_{2}(e_3,e_4)=-99e_5-18e_6-990e_8,\) & \( \psi_{2}(e_3,e_5)=-153e_8, \) \\
\( \psi_{2}(e_3,e_6)=-165e_7-531e_8,\) & \( \psi_{2}(e_4,e_2)=198e_5-132e_7+893e_8, \) \\
\( \psi_{2}(e_4,e_3)=99e_5+18e_6+498e_7+792e_8,\) & \( \psi_{2}(e_4,e_4)=33e_7+183e_8, \) \\
\( \psi_{2}(e_4,e_6)=-330e_8,\) & \( \psi_{2}(e_5,e_1)=198e_5+132e_7-1136e_8, \) \\
\( \psi_{2}(e_5,e_2)=33e_5-9e_6+162e_7-162e_8,\) & \( \psi_{2}(e_5,e_3)=207e_8, \) \\
\( \psi_{2}(e_5,e_4)=-54e_8,\) & \( \psi_{2}(e_6,e_1)=-165e_4+33e_5+489e_6, \) \\
\( \psi_{2}(e_6,e_2)=-99e_5-18e_6+198e_8,\) & \( \psi_{2}(e_6,e_3)=33e_7+795e_8, \) \\
\( \psi_{2}(e_6,e_4)=363e_8,\) & \( \psi_{2}(e_7,e_1)=366e_7+198e_8, \) \\
\( \psi_{2}(e_7,e_3)=27e_8,\) & \( \psi_{2}(e_8,e_1)=-66e_7+132e_8, \) \\
\( \psi_{2}(e_8,e_2)=-27e_8,\) & \\
\end{longtable}
and the cocycle $\psi_3$ is defined by
\begin{longtable}{lll}
\( \psi_{3}(e_1,e_6)=10e_6,\) & \( \psi_{3}(e_2,e_6)=30e_7, \) & \( \psi_{3}(e_3,e_2)=-66e_7,\)\\ 
\( \psi_{3}(e_3,e_3)=-22e_8, \) & \( \psi_{3}(e_3,e_4)=3e_7,\) & \( \psi_{3}(e_3,e_6)=10e_8, \) \\
\( \psi_{3}(e_4,e_2)=-22e_8,\) & \( \psi_{3}(e_4,e_3)=-14e_7, \) & \( \psi_{3}(e_4,e_4)=-7e_8,\) \\
\( \psi_{3}(e_5,e_1)=22e_8, \) & \( \psi_{3}(e_6,e_1)=-10e_6,\) & \( \psi_{3}(e_6,e_2)=3e_7, \) \\
\( \psi_{3}(e_6,e_3)=-7e_8,\) & \( \psi_{3}(e_7,e_1)=-11e_7, \) \\
\end{longtable}
while $\psi_4(x,y)=\chi(x,y)e_8$ with $\chi$ as in~\eqref{eq:chi}. Substituting these values into the cocycle identity $\mu\circ\psi+\psi\circ\mu=0$ on all $512$ triples of basis vectors shows that $\psi_1,\dots,\psi_4$ are indeed cocycles, and adjoining them to a basis of $B^2(L,L)$ raises the rank from $58$ to $62=\dim Z^2(L,L)$, so that their classes form a basis of $\HL^2(L,L)$, as stated in Theorem~\ref{thm:pre}(b). The cocycles $\psi_1$ and $\psi_2$ take values outside the centre $\C e_8$; by Theorem~\ref{thm:def}(a), an infinitesimal deformation $\sum_ic_i\psi_i$ with $(c_1,c_2)\ne(0,0)$ is obstructed already at second order.

\section{The structure of \texorpdfstring{$L$}{L}}\label{sec:structure}

Now we study filtration by ``weights'' compatible with the product of the algebra $L$. We identify the lower central series with this filtration and describe the annihilators and the Leibniz kernel. Give $e_1,\dots,e_8$ the weights $w=1,1,2,3,3,4,4,5$ and let $F^k=\operatorname{span}\{e_i: w(e_i)\ge k\}$.

\begin{proposition}\label{prop:structure} Let \(L\) be an eight-dimensional Leibniz algebra whose multiplication is given by (\ref{eq:table}). Then
\begin{enumerate}
\item[(a)] $L$ is generated by $e_1,e_2$: $e_3=e_1e_2$, $e_4=e_1e_3$, $e_5=e_2e_3$, $e_6=e_1e_4$, $e_7=\tfrac13e_2e_4$, $e_8=e_1e_6$.
\item[(b)] $F^iF^j\subseteq F^{i+j}$, and $L^k=F^k$ for all $k$:
\[L^2=\langle e_3,\dots,e_8\rangle,\ L^3=\langle e_4,\dots,e_8\rangle,\ L^4=\langle e_6,e_7,e_8\rangle,\ L^5=\langle e_8\rangle,\ L^6=0.\]
In particular $L$ is nilpotent of class $5$ and $L^iL^j\subseteq L^{i+j}$.
\item[(c)] $\Ann_r(L)=\Leib(L)=\langle e_7,e_8\rangle$ and $\Ann_l(L)=Z(L)=\langle e_8\rangle$.
\item[(d)] $L/\Leib(L)\cong\n_6$, the Lie algebra with basis $x_1,\dots,x_6$ and nonzero brackets $[x_1,x_2]=x_3$, $[x_1,x_3]=x_4$, $[x_2,x_3]=x_5$, $[x_1,x_4]=x_6$. It is nilpotent of class $4$, with $\n_6^2=\langle x_3,\dots,x_6\rangle$, $\n_6^3=\langle x_4,x_5,x_6\rangle$, $\n_6^4=\langle x_6\rangle$ and centre $\langle x_5,x_6\rangle$.
\item[(e)] For $x,y\in L^2$ one has $xy=-x_3y_3\,e_7+(2x_3y_3+x_3y_4-2x_4y_3)\,e_8$; in particular $L^2L^2\subseteq\langle e_7,e_8\rangle$ and $L^3L^3=0$.
\item[(f)] For $x\in L$ one has $xx=Q_7(x)e_7+Q_8(x)e_8$, where
\begin{align*}
Q_7(x)&=-9x_1x_2-2x_1x_5-6x_2^2+2x_2x_4-x_3^2,\\
Q_8(x)&=4x_1^2+x_1x_2+4x_1x_5-x_1x_7-x_2^2-6x_2x_3-4x_2x_4+3x_2x_6+2x_3^2-x_3x_4.
\end{align*}
\end{enumerate}
\end{proposition}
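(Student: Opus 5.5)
The whole proposition will be read off the coordinate formula \eqref{eq:coords}, which is simply \eqref{eq:table} expanded bilinearly. I will go through the parts in order, since each one uses the previous.

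For (a), I read the products directly from \eqref{eq:table}: $e_1e_2=e_3$, $e_1e_3=e_4$, $e_2e_3=e_5$, $e_1e_4=e_6$, $e_2e_4=3e_7$, $e_1e_6=e_8$.

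For the first half of (b), I check $F^iF^j\subseteq F^{i+j}$ entry by entry: every nonzero product $e_ae_b$ in \eqref{eq:table} involves only basis vectors of weight $\ge w(e_a)+w(e_b)$. For example, $e_1e_1=4e_8$ has weight $5\ge2$, and $e_2e_4=3e_7$ has weight $4\ge4$.

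This filtration property gives $L^k\subseteq F^k$ by induction on $k$, since $L^{k+1}=L^kL\subseteq F^kF^1\subseteq F^{k+1}$. For the reverse inclusion I exhibit the basis vectors inside the right terms:
\begin{itemize}
\item $e_3=e_1e_2\in L^2$;
\item $e_4=e_1e_3$ and $e_5=e_2e_3$ are in $L^3$, since these are products of the form $L\cdot L^2$;
\item $e_6=e_1e_4$ and $e_7=\tfrac13e_2e_4$ are in $L^4$;
\item $e_8=e_1e_6\in L^5$.
\end{itemize}
A caveat for this step: with the convention $A^{k+1}=A^kA$, the products in $L^{k+1}$ have their left factor in $L^k$, whereas the products above have the deeper element on the right. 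So I will instead use right products, or check that they also work: $e_3e_1=-e_4$, $e_3e_2=-e_5-6e_8$, $e_4e_1=-e_6$, $e_4e_2=-e_7-4e_8$, $e_6e_1=-e_8$. These exhibit the needed elements once $e_8\in L^5$ is known. Since $e_8=-e_6e_1$ with $e_6\in L^4$, I proceed top-down within the induction.

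The remaining claims of (b) then follow: $L^6=F^6=0$, and $L^iL^j\subseteq L^{i+j}$ because $L^k=F^k$.

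For (e), I restrict \eqref{eq:coords} to $x_1=x_2=y_1=y_2=0$, and only the stated terms survive. If moreover $x,y\in L^3$, then $x_3=y_3=0$, so the product vanishes.

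For (f), I set $y=x$ in \eqref{eq:coords}. Coordinates 3 through 6 become antisymmetric expressions evaluated on equal arguments, so they vanish; collecting coordinates 7 and 8 gives $Q_7$ and $Q_8$.

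For (c), there are four claims.
\begin{itemize}
\item \emph{Leibniz kernel.} By (f), $\Leib(L)\subseteq\langle e_7,e_8\rangle$. For equality I pick squares with independent coefficient vectors: $e_1e_1=4e_8$, and $e_3e_3=-e_7+2e_8$.
\item \emph{Right annihilator.} $\Ann_r(L)=\{y: xy=0\ \forall x\}$, so I read off which $y$-coordinates appear in \eqref{eq:coords}. Setting $x=e_1$ kills $y_1,\dots,y_6$ in turn: coordinates 3, 4, 6 of $e_1y$ give $y_2,y_3,y_4$, and coordinate 8 gives $4y_1+6y_5+y_6$. Setting $x=e_2$ adds $y_5$ from coordinate 5 of $e_2y$, which is $y_3$, and from the $e_8$-coordinate. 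No $y_7,y_8$ terms appear anywhere, so $\Ann_r(L)=\langle e_7,e_8\rangle$.
\item \emph{Left annihilator.} Only $x_8$ never appears in \eqref{eq:coords}. Taking $y=e_1$ and $y=e_2$ forces the other $x_i$ to be $0$; the least obvious one is $x_7$, which appears through $-x_7y_1$. Hence $\Ann_l(L)=\langle e_8\rangle$.
\item \emph{Centre.} $Z(L)=\langle e_7,e_8\rangle\cap\langle e_8\rangle=\langle e_8\rangle$.
\end{itemize}

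For (d), I write $x_i$ for the image of $e_i$ in $L/\langle e_7,e_8\rangle$. The multiplication is obtained by dropping the $e_7$ and $e_8$ terms from \eqref{eq:table}. This gives the products $x_1x_2=x_3$, $x_1x_3=x_4$, $x_2x_3=x_5$, $x_1x_4=x_6$, each with the antisymmetric partner, and this is $\n_6$. The lower central series and centre of $\n_6$ are then immediate.

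The main obstacle is only the bookkeeping in (b), together with the orientation of products in $L^{k+1}=L^kL$. I expect it to resolve by proving $F^k\subseteq L^k$ by descending induction on the weight, using the right-multiplication identities listed above.
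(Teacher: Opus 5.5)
Your proposal is correct and is essentially the paper's proof: everything is read off from \eqref{eq:coords}, and the reverse inclusion $F^k\subseteq L^k$ uses exactly the paper's right-multiplication identities. The only variation is that you get $\Leib(L)\subseteq\langle e_7,e_8\rangle$ from (f) instead of from $x(yy)=0$.

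Two bookkeeping fixes are needed. First, that reverse inclusion is an ordinary \emph{ascending} induction on $k$, not a descending one: each $e_i$ of weight $\ge k+1$ is, up to sign and a multiple of $e_8=-e_6e_1$, a product $e_je_1$ or $e_je_2$ with $e_j\in F^k$. Second, in computing $\Ann_r(L)$ it is the $e_3$-coordinate of $e_2y$ that gives $y_1=0$, after which the $e_8$-coordinates of $e_1y$ and $e_2y$ give $6y_5+y_6=y_5+2y_6=0$, hence $y_5=y_6=0$.
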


\begin{proof}
(a) is read off from~\eqref{eq:table}. (b) Inspection of~\eqref{eq:table} shows that $e_ie_j\in F^{w(e_i)+w(e_j)}$ for all $i,j$, whence $F^iF^j\subseteq F^{i+j}$ and by induction $L^k\subseteq F^k$. Conversely $e_3=e_1e_2\in L^2$, $e_4=-e_3e_1$, $e_5=-e_3e_2-6e_8$, $e_6=-e_4e_1$, $e_7=-e_4e_2-4e_8$, $e_8=-e_6e_1$, together with $e_8\in L^5$, give the reverse inclusions inductively.
(c) Let $a\in L$ with $La=0$. From~\eqref{eq:coords}, $e_1a=a_2e_3+a_3e_4+a_4e_6+(4a_1+6a_5+a_6)e_8$ and $e_2a=-a_1e_3+a_3e_5+(-9a_1-6a_2+3a_4)e_7+(a_1-a_2+a_5+2a_6)e_8$, so $a_1=\dots=a_4=0$ and $6a_5+a_6=a_5+2a_6=0$, whence $a_5=a_6=0$. Conversely $e_7,e_8$ never occur as right factors in~\eqref{eq:table}. Thus $\Ann_r(L)=\langle e_7,e_8\rangle$. Putting $z=y$ in~\eqref{eq:leibniz} gives $(xy)y=x(yy)+(xy)y$, i.e.\ $x(yy)=0$ for all $x,y\in L$; hence $\Leib(L)\subseteq\Ann_r(L)$, and $e_1e_1=4e_8$, $e_2e_2=-6e_7-e_8$ show equality. If $xL=0$, then the coordinates $(xe_2)_3=x_1$, $(xe_1)_3=-x_2$, $(xe_1)_4=-x_3$, $(xe_1)_6=-x_4$, $(xe_1)_7=-9x_2-2x_5$, $(xe_2)_8=-x_2-6x_3-4x_4-x_5+x_6$, $(xe_1)_8=4x_1+x_2-2x_5-x_6-x_7$ vanish, so $x\in\C e_8$; and $e_8$ annihilates $L$ on both sides.
(d) Reduce~\eqref{eq:table} modulo $\langle e_7,e_8\rangle$ and put $x_i=\bar e_i$; the statements about $\n_6$ are immediate. (For the place of $\n_6$ among the six-dimensional nilpotent Lie algebras, see the classification in~\cite{deGraaf}.)
(e) and (f) follow from~\eqref{eq:coords}.
\end{proof}

\begin{remark}[The class of $L$]\label{rem:class}
$L$ belongs to the class of finite-dimensional nilpotent, non-Lie, characteristically nilpotent Leibniz algebras whose first adjoint cohomology $\HL^1(L,L)=\Der(L)/\Inn(L)$ vanishes. For Lie algebras this class is empty by Jacobson's theorem~\cite{Jacobson}; the algebra $L$ shows that it is non-empty for Leibniz algebras. Every nilpotent Leibniz algebra $A$ without outer derivations is characteristically nilpotent, because each right multiplication satisfies $R_a(A^k)=A^ka\subseteq A^{k+1}$ and is therefore nilpotent; conversely, the characteristically nilpotent Lie algebras of Dixmier and Lister~\cite{DixmierLister} show that characteristic nilpotency does not imply the absence of outer derivations. Since a characteristically nilpotent algebra admits no non-trivial grading by positive integers compatible with the product (the degree map of such a grading would be an invertible derivation), the weight filtration $F^\bullet$ of Proposition~\ref{prop:structure} does not come from a grading of $L$. Questions about algebras of this type, such as their minimal dimension and their behaviour under deformations, connect the structure theory of Leibniz algebras with their cohomology; see Section~\ref{sec:def}.
\end{remark}

\subsection{Other nilpotent Leibniz algebras without outer derivations}
Once one example contradicting the Leibniz analogue of Jacobson's theorem is known, it is natural to ask how robust the phenomenon is, and in particular whether $L$ is an isolated example. The following theorem shows that it is not: deforming the multiplication of $L$ in a suitable direction produces further nilpotent Leibniz algebras without outer derivations. The family below will reappear in Section~\ref{sec:def}, where it is shown to be essentially the only direction in which $L$ can be deformed. Its significance in the present context is that it shows that nilpotent Leibniz algebras without outer derivations occur in continuous families of pairwise non-isomorphic algebras.

\begin{theorem}\label{thm:Lt}
For $t\in\C$ let $L_t$ be the algebra with the same underlying space as $L$ and product $x*_ty=xy+t\chi(x,y)e_8$, with $\chi$ as in~\eqref{eq:chi}. Then:
\begin{enumerate}
\item[(a)] each $L_t$ is a nilpotent right Leibniz algebra of class $5$ which is not a Lie algebra, and $L_0=L$;
\item[(b)] $L_s\cong L_t$ if and only if $s=t$;
\item[(c)] $\Ann_r(L_t)=\langle e_7,e_8\rangle$, so $\dim\Inn(L_t)=6$;
\item[(d)] $\Der(L_t)=\Inn(L_t)$ for every $t\ne-1$, while $\dim\Der(L_{-1})=9$.
\end{enumerate}
Consequently $\{L_t: t\ne-1\}$ is a one-parameter family of pairwise non-isomorphic eight-dimensional nilpotent Leibniz algebras all of whose derivations are inner.
\end{theorem}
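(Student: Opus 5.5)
Part (a) comes first. Since $\psi_4=\chi\,e_8$ takes values in the centre $\C e_8$ and $e_8$ never appears in $\chi$'s arguments with a nonzero coefficient, the Leibniz identity for $*_t$ expands as $\mu\circ\mu+t(\mu\circ\psi_4+\psi_4\circ\mu)+t^2\,\psi_4\circ\psi_4$. The first term vanishes because $L$ is Leibniz. The middle term vanishes because $\psi_4$ is a cocycle by Theorem~\ref{thm:pre}(b). For the last term, $\psi_4\circ\psi_4$ evaluates $\chi$ at an argument that is a multiple of $e_8$. The only coefficient of $\chi$ involving $x_8$ or $y_8$ is zero, and $e_8$ is central, so $\psi_4\circ\psi_4=0$. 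Thus every $L_t$ is a right Leibniz algebra.

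Each monomial of $\chi$ pairs weights summing to $5$: $(3,1),(3,3)$ with $x_3$ of weight $2$, then $(2,1),(2,3),(3,2),(4,1),(4,1)$. Hence the perturbation preserves $F^iF^j\subseteq F^{i+j}$ with $e_8\in F^5$. The generation argument of Proposition~\ref{prop:structure}(b) goes through unchanged, because the perturbation only adds multiples of $e_8\in L^5$. So $L_t^k=F^k$, the class is $5$, $L_0=L$, and $e_2*_te_2=-6e_7-e_8\ne0$ shows $L_t$ is not Lie.

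For (c), I would redo the computation of Proposition~\ref{prop:structure}(c). The $e_8$-coordinates of $e_1*_ta$ and $e_2*_ta$ are unaffected, since $\chi(e_1,\cdot)=\chi(e_2,\cdot)=0$, so the same linear system shows $\Ann_r(L_t)=\langle e_7,e_8\rangle$. Because $\Inn(L_t)\cong L_t/\Ann_r(L_t)$, this gives $\dim\Inn(L_t)=6$.

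For (d), I would solve the derivation system for $L_t$ with $t$ kept as a parameter. The system is linear in the $64$ unknowns with coefficients affine in $t$. I would use the filtration to cut it down: a derivation is determined by $D(e_1),D(e_2)$, since these generate. This leaves $16$ unknowns, constrained by the relations expressing $e_3,\dots,e_8$ through the generators together with the remaining table relations. Gaussian elimination over $\C(t)$ should give rank $10$ generically, i.e.\ solution dimension $6$. The rank drops exactly where some pivot, expected to be a multiple of $(1+t)$, vanishes, and at $t=-1$ one checks directly that the solution space is $9$-dimensional. \textbf{This elimination, and certifying that $t=-1$ is the only degenerate value, is the main computational obstacle.} I would organise it weight by weight, treating the $e_8$-components last, since that is where $t$ enters.

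For (b), an isomorphism $\phi\colon L_s\to L_t$ preserves the lower central series and hence the filtration, and induces an isomorphism modulo the $t$-independent part. I would write $\phi$ on the generators $e_1,e_2$ with a general $2\times2$ linear part $g$ and higher terms, then propagate to $e_3,\dots,e_8$. Since $L_s/\langle e_8\rangle$ does not depend on $s$, comparing structure constants in weight $<5$ first forces strong restrictions on $g$, plausibly $g$ upper/lower triangular with $\det$-type constraints mirroring Section~\ref{sec:aut}, where every automorphism is trivial on $L/L^2$. Then comparing the $e_8$-coefficients of a few products, such as $e_3e_2$ and $e_6e_2$, should give an equation of the form $\lambda(1+s)=\lambda'(1+t)$ with the scalars forced to be equal, yielding $s=t$. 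If instead nontrivial scalings survive, I would look for an invariant: a ratio of coefficients in $L^2L^2\to L^5$ against $L^4L\to L^5$, which is scale-invariant and affine in $t$.
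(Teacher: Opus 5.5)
Parts (a), (c) and (d) follow the paper's route. The genuine gap is in (b). Your plan expects the homomorphism conditions below weight $5$ to pin down the linear part $g$ of $P$ on $\langle e_1,e_2\rangle$, as in Theorem~\ref{thm:aut-quotient}. A single $e_8$-equation $\lambda(1+s)=\lambda'(1+t)$ with $\lambda=\lambda'$ would then finish. That is not what happens. Write $P(e_1)=\sum a_ie_i$ and $P(e_2)=\sum b_ie_i$. Modulo $\C e_8=L_s^5=L_t^5$, the only restriction on $g$ is $b_1=0$, from the $e_6$-coordinate of $P(e_2e_5)$. The other weight-$4$ conditions, coming from $e_1e_1$, $e_2e_2$ and $e_2e_1$, are absorbed by the higher coordinates $a_5$, $b_4$, $b_5$. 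All further restrictions in Theorem~\ref{thm:aut-quotient} come from $e_8$-coordinates, and the one giving $b_2=a_1+2a_2$ is precisely one that now involves $s$ and $t$. Concretely, put $u=a_1+2a_2$; then $P(e_8)=a_1^3b_2u\,e_8$. The $e_8$-coordinate of $P(e_7*_se_1)$ yields only $(1+s)u=(1+t)b_2$. The same holds for $P(e_3*_se_2)$, after using the $e_7$-coordinate relation $-6b_2^2+2b_2b_4-b_3^2=-6a_1^2b_2^2$ of $P(e_2*_se_2)$. On its own this equation is satisfiable for any $s,t\ne-1$ by adjusting $u/b_2$, so nothing forces your $\lambda$ and $\lambda'$ to coincide.

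The missing idea is to use a second $e_8$-equation with a different dependence on the parameter, and eliminate the unknown ratio. The $e_8$-coordinate of $P(e_3*_se_4)$ (the paper's choice), or of $P(e_6*_se_2)$ (the other product you name), gives $(1+3s)u=(1+3t)b_2$. Adding it to three times $(1+t)b_2-(1+s)u=0$ gives $2(b_2-u)=0$, hence $u=b_2$, and then $3(s-t)b_2=0$ with $b_2\ne0$. Your fallback invariant, an $L^2L^2\to L^5$ coefficient against an $L^4L\to L^5$ one, points this way. However, the relevant quantity $(1+3t)/(1+t)$ is M\"obius, not affine, in $t$. The real work is checking that the higher coordinates of $P(e_1),P(e_2)$ drop out of both equations, which your sketch leaves undone.

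Two minor points elsewhere. In (a), the monomial $-6x_3y_2$ of $\chi$ has weights $(2,1)$, summing to $3$ rather than $5$; this is harmless, since only a sum $\le5$ is needed. In (d), vanishing of a pivot over $\C(t)$ is only necessary for a rank drop, not sufficient. Certifying that $t=-1$ is the sole exceptional value therefore needs an argument like the paper's: eliminate with nonzero constant pivots, then observe that the residual $11\times9$ system is $(t+1)$ times one of rank $3$ for every $t$.
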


\begin{proof}
(a) The Leibniz identity for $*_t$ is a polynomial identity in $t$ of degree at most $2$; the coefficient of $t^0$ is~\eqref{eq:leibniz} for $L$, the coefficient of $t$ is the cocycle condition for $\psi_4$ (Theorem~\ref{thm:pre}(b)), and the coefficient of $t^2$ is $\psi_4\circ\psi_4=0$, which holds because $\psi_4$ takes values in $\C e_8$ and vanishes whenever one argument equals $e_8$. The terms of $\chi$ pair basis vectors of weights $(2,1)$, $(2,3)$, $(3,2)$, $(4,1)$ and $(4,1)$ and produce $e_8$, of weight $5$; hence the weight filtration is still multiplicative and $L_t$ is nilpotent of class $5$ ($e_8=-e_6*_te_1$ still lies in $L_t^5$). $L_t$ is not a Lie algebra as $e_1*_te_1=4e_8$.

(b) An isomorphism $P\colon L_s\to L_t$ maps $L_s^2=\langle e_3,\dots,e_8\rangle$ onto $L_t^2=\langle e_3,\dots,e_8\rangle$ and therefore induces an invertible map on the quotient. Write $P(e_1)=\sum a_ie_i$, $P(e_2)=\sum b_ie_i$. The generator relations of Proposition~\ref{prop:structure}(a) are unchanged in $L_t$ (none of the products $e_1e_2,e_1e_3,e_2e_3,e_1e_4,e_2e_4,e_1e_6$ is affected by $\chi$), so $P(e_3),\dots,P(e_8)$ are determined by $P(e_1),P(e_2)$ through the product of $L_t$. Exactly as in the proof of Theorem~\ref{thm:aut-quotient} below, the $e_6$-coordinate of $P(e_2e_5)=P(e_2)*_tP(e_5)$ gives $b_1^2(a_2b_1-a_1b_2)=0$, hence $b_1=0$ and $a_1b_2\ne0$. With $b_1=0$, the $e_8$-coordinates of $P(e_3*_se_4)=P(e_3)*_tP(e_4)$ and of $P(e_7*_se_1)=P(e_7)*_tP(e_1)$ give, after a direct computation with~\eqref{eq:coords} and~\eqref{eq:chi},
\[a_1^3b_2\bigl((1+3s)u-(1+3t)b_2\bigr)=0,\qquad a_1^3b_2\bigl((1+t)b_2-(1+s)u\bigr)=0,\qquad u=a_1+2a_2.\]
Dividing by $a_1^3b_2$ and adding the first equation to three times the second gives $2(b_2-u)=0$, so $u=b_2$, and then the first equation becomes $3(s-t)b_2=0$, i.e.\ $s=t$.

(c) The products $e_1*_ta$ and $e_2*_ta$ coincide with $e_1a$ and $e_2a$, because $\chi(e_1,\cdot)=\chi(e_2,\cdot)=0$; so the argument of Proposition~\ref{prop:structure}(c) applies verbatim.

(d) Write a linear map $D$ as an $8\times8$ matrix of unknowns. The derivation identity on pairs of basis vectors is a homogeneous linear system of $512$ equations in $64$ unknowns whose coefficients are polynomials of degree at most one in $t$. Gaussian elimination using only pivots that are nonzero rational constants eliminates $55$ unknowns; the remaining $11$ equations in the remaining $9$ unknowns have all coefficients divisible by $t+1$, and after division by $t+1$ they have rank $3$ for every $t$. Hence the solution space has dimension $64-55-3=6$ for $t\ne-1$ and $64-55=9$ for $t=-1$. Since $\Inn(L_t)\subseteq\Der(L_t)$ has dimension $6$ by (c), (d) follows. (This elimination was performed with exact rational arithmetic by computer; for $t=0$ the result is Theorem~\ref{thm:pre}(a).)
\end{proof}

\begin{remark}
The exceptional value $t=-1$ can be explained directly. Since $\chi(e_7,y)=-y_1$, one has $e_7*_te_1=-(1+t)e_8$ and $e_7*_te_j=0$ for $j\ne1$, while $x*_te_7=0$ for all $x$ by (c). Hence $e_7$ is central in $L_{-1}$, and the map $d(x)=x_1e_7$ is a derivation of $L_{-1}$: it vanishes on $L_{-1}^2\subseteq\langle e_3,\dots,e_8\rangle$, and $d(x)*_{-1}y=x*_{-1}d(y)=0$. It is not inner, because by~\eqref{eq:coords} and $\chi(e_1,\cdot)=0$ every right multiplication of $L_{-1}$ maps $e_1$ to a vector with vanishing $e_7$-coordinate, whereas $d(e_1)=e_7$. By Theorem~\ref{thm:Lt} the property ``all derivations are inner'' is therefore not closed under degeneration within the family, but it is an open condition: the set of $t$ for which $\dim\Der(L_t)=6$ is Zariski open.
\end{remark}

\subsection{Derivations}\label{sec:der} By Theorem~\ref{thm:pre}(a), every derivation of $L$ is inner. In this section we take the next step and determine the Lie algebra structure of $\Der(L)$ under the commutator bracket. Since $\Der(L)=\Inn(L)$, the identity $[R_x,R_y]=R_{yx}$ established in the proof below reduces this question to a computation inside $L$ itself, and the answer turns out to be the Lie algebra $\n_6=L/\Leib(L)$ of Proposition~\ref{prop:structure}(d).

\begin{theorem}\label{thm:der}
$\Der(L)=\Inn(L)=\{R_a: a\in L\}$, $\dim\Der(L)=6$, and a basis is given by $D_i=-R_{e_i}$, $1\le i\le 6$. The nonzero commutators of basis elements are
\[
[D_1,D_2]=D_3,\quad [D_1,D_3]=D_4,\quad [D_2,D_3]=D_5,\quad [D_1,D_4]=D_6 .
\]
Consequently the linear map $D_i\mapsto x_i$ is an isomorphism of Lie algebras $\Der(L)\cong\n_6$. Equivalently, $R_a\mapsto-\bar a$ is an isomorphism $\Der(L)\to L/\Leib(L)$ of Lie algebras, and $R_a\mapsto\bar a$ is an anti-isomorphism. In particular $\Der(L)$ is nilpotent of class $4$, its centre is $\langle D_5,D_6\rangle$, and $L$ is characteristically nilpotent.
\end{theorem}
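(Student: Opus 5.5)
The first assertion, $\Der(L)=\Inn(L)=\{R_a:a\in L\}$ with $\dim\Der(L)=6$, is Theorem~\ref{thm:pre}(a), so only the bracket structure needs proof. The map $a\mapsto R_a$ is linear with kernel $\Ann_r(L)=\langle e_7,e_8\rangle$ by Proposition~\ref{prop:structure}(c). Hence it induces a linear isomorphism $L/\Leib(L)\to\Inn(L)$, and $D_i=-R_{e_i}$, $1\le i\le6$, form a basis of $\Der(L)$.

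The key identity is $[R_x,R_y]=R_{yx}$. To prove it, apply \eqref{eq:leibniz} in the form $(zy)x=z(yx)+(zx)y$. This gives $R_xR_y(z)=(zy)x=R_{yx}(z)+R_yR_x(z)$, so $R_xR_y-R_yR_x=R_{yx}$. Consequently
\[
[D_i,D_j]=[R_{e_i},R_{e_j}]=R_{e_je_i}=-R_{e_ie_j}+R_{e_ie_j+e_je_i}.
\]
Since $e_ie_j+e_je_i=(e_i+e_j)(e_i+e_j)-e_ie_i-e_je_j\in\Leib(L)=\Ann_r(L)$, the last term vanishes, and $[D_i,D_j]=-R_{e_ie_j}$.

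It remains to read $e_ie_j$ modulo $\langle e_7,e_8\rangle$ off \eqref{eq:table} for $1\le i<j\le6$. This gives
\[
e_1e_2\equiv e_3,\quad e_1e_3\equiv e_4,\quad e_2e_3\equiv e_5,\quad e_1e_4\equiv e_6,
\]
and all other $e_ie_j$ with $i<j\le6$ lie in $\langle e_7,e_8\rangle$ (for example $e_2e_4=3e_7$, $e_1e_5=6e_8$, $e_3e_4=e_8$). Hence $[D_1,D_2]=D_3$, $[D_1,D_3]=D_4$, $[D_2,D_3]=D_5$, $[D_1,D_4]=D_6$, and all other commutators vanish. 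These are exactly the structure constants of $\n_6$ in Proposition~\ref{prop:structure}(d), so $D_i\mapsto x_i$ is a Lie algebra isomorphism. The same computation shows $[R_a,R_b]=-R_{ab}$ modulo nothing, because the identity holds for all $a,b$. Therefore $R_a\mapsto-\bar a$ satisfies $[R_a,R_b]\mapsto\overline{ab}=[-\bar a,-\bar b]$ and is an isomorphism. Likewise $R_a\mapsto\bar a$ sends $[R_a,R_b]$ to $-\overline{ab}=[\bar b,\bar a]$ and is an anti-isomorphism.

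The remaining claims transfer from $\n_6$ via the isomorphism. By Proposition~\ref{prop:structure}(d), $\Der(L)$ is nilpotent of class $4$ with centre $\langle D_5,D_6\rangle$. Characteristic nilpotency follows from Remark~\ref{rem:class}: each $R_a$ maps $L^k$ into $L^{k+1}$ and is therefore nilpotent.

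The only delicate point is the sign bookkeeping, namely the use of the right Leibniz identity to obtain $R_{yx}$ rather than $R_{xy}$ and the symmetrization argument in $\Leib(L)$. The rest is table lookup.
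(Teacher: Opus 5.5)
Your proof is correct and takes the same route as the paper: both use $[R_x,R_y]=R_{yx}$, the fact that $\ker(a\mapsto R_a)=\langle e_7,e_8\rangle$, and a lookup in the multiplication table. The one small difference is that you symmetrize via $e_ie_j+e_je_i\in\Leib(L)\subseteq\Ann_r(L)$ and then read off $e_ie_j$ for $i<j$. The paper instead reads $e_je_i$ directly and handles the pairs $3\le i<j\le6$ with Proposition~\ref{prop:structure}(e).
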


\begin{proof}
The first two statements are Theorem~\ref{thm:pre}(a) together with Proposition~\ref{prop:structure}(c): the kernel of $a\mapsto R_a$ is $\langle e_7,e_8\rangle$, so $R_{e_1},\dots,R_{e_6}$ is a basis. For all $x,y,z\in L$, identity~\eqref{eq:leibniz} gives $[R_x,R_y](z)=(zy)x-(zx)y=z(yx)$, that is, $[R_x,R_y]=R_{yx}$. Hence $[D_i,D_j]=[R_{e_i},R_{e_j}]=R_{e_je_i}$. Using~\eqref{eq:table} and $R_{e_7}=R_{e_8}=0$:
$[D_1,D_2]=R_{e_2e_1}=R_{-e_3-9e_7+e_8}=D_3$, $[D_1,D_3]=R_{e_3e_1}=-R_{e_4}=D_4$, $[D_2,D_3]=R_{e_3e_2}=R_{-e_5-6e_8}=D_5$, $[D_1,D_4]=R_{e_4e_1}=-R_{e_6}=D_6$, $[D_2,D_4]=R_{e_4e_2}=R_{-e_7-4e_8}=0$, $[D_1,D_5]=R_{-2e_7-2e_8}=0$, $[D_2,D_5]=R_{-e_8}=0$, $[D_1,D_6]=[D_2,D_6]=0$ since $e_6e_1,e_6e_2\in\C e_8$; finally for $3\le i<j\le6$ we have $e_je_i\in L^2L^2\subseteq\langle e_7,e_8\rangle$ by Proposition~\ref{prop:structure}(e). Comparing with Proposition~\ref{prop:structure}(d) gives the isomorphism with $\n_6$, and $D_i\mapsto x_i=\bar e_i$ is $R_a\mapsto-\bar a$. Finally, by Proposition~\ref{prop:structure}(b) we have $R_a(L^k)=L^ka\subseteq L^{k+1}$ for all $k$ and all $a\in L$, so $R_a^5=0$; hence every derivation of $L$ is nilpotent and $L$ is characteristically nilpotent.
\end{proof}

The theorem has a pleasant interpretation. For any Leibniz algebra $A$, the computation in the proof above shows that $[R_x,R_y]=R_{yx}$, and putting $z=y$ in~\eqref{eq:leibniz} shows that $\Leib(A)\subseteq\Ann_r(A)=\ker(x\mapsto R_x)$; hence $\Inn(A)$ is a quotient of the Lie algebra $A/\Leib(A)$ (with the opposite bracket) by the image of $\Ann_r(A)$. In $L$ the right annihilator coincides with the Leibniz kernel, so no further quotient occurs, and the absence of outer derivations means that this quotient is all of $\Der(L)$. Consequently the Lie algebra of infinitesimal symmetries of $L$ is canonically the Lie algebra over which $L$ is built. For a nilpotent Lie algebra $\mathfrak g$, by contrast, $\Der(\mathfrak g)$ is always strictly larger than $\mathfrak g/Z(\mathfrak g)$.

\subsection{Automorphisms}\label{sec:aut} The automorphism group of an algebra is the group of its global symmetries, and its Lie algebra is the derivation algebra. Knowing $\Der(L)$ therefore determines the identity component of $\Aut(L)$, but not the whole group: in general $\Aut(L)$ may have many connected components, and for nilpotent Lie algebras which admit a grading it contains a torus acting by non-trivial scalars on the generators. The point of this section is to show that for $L$ nothing of this kind happens. The key step, Theorem~\ref{thm:aut-quotient}, is a direct computation showing that every automorphism acts trivially on $L/L^2$; after that, the classical correspondence between nilpotent derivations and unipotent automorphisms, which we establish in the proof of Theorem~\ref{thm:aut}, together with Theorem~\ref{thm:der}, does the rest.

\begin{theorem}\label{thm:aut-quotient}
Every automorphism $\varphi$ of $L$ satisfies $\varphi(e_1)\equiv e_1$ and $\varphi(e_2)\equiv e_2\pmod{L^2}$. More generally, if $\varphi$ is an endomorphism of $L$ whose induced map on $L/L^2$ is invertible, then $\varphi$ induces the identity on $L/L^2$.
\end{theorem}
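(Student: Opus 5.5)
The plan is to use that $L$ is generated by $e_1,e_2$ (Proposition~\ref{prop:structure}(a)). Any endomorphism $\varphi$ is then determined by the two vectors
\[
\varphi(e_1)=\sum a_ie_i,\qquad \varphi(e_2)=\sum b_ie_i .
\]
Put $\Delta=a_1b_2-a_2b_1$. The hypothesis that $\varphi$ is invertible on $L/L^2$ says exactly that $\Delta\ne0$. We then propagate images along the generator relations:
\[
\varphi(e_3)=\varphi(e_1)\varphi(e_2),\quad \varphi(e_4)=\varphi(e_1)\varphi(e_3),\quad \varphi(e_5)=\varphi(e_2)\varphi(e_3),
\]
\[
\varphi(e_6)=\varphi(e_1)\varphi(e_4),\quad \varphi(e_7)=\tfrac13\varphi(e_2)\varphi(e_4),\quad \varphi(e_8)=\varphi(e_1)\varphi(e_6),
\]
each computed with the coordinate formula~\eqref{eq:coords}. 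Every remaining entry of the table~\eqref{eq:table} that is not one of these defining relations becomes a polynomial constraint on the $a_i,b_i$. Because $F^iF^j\subseteq F^{i+j}$, the coordinate of $\varphi(e_k)$ in weight $w(e_k)$ is a homogeneous polynomial in $a_1,a_2,b_1,b_2$ of degree $w(e_k)$. The higher-weight coordinates of $\varphi(e_1),\varphi(e_2)$ enter only at strictly higher weight. So we first work with leading (weight-homogeneous) parts of the constraints. We then handle the correction terms in $a_3,\dots,b_8$ only where a constraint lands in a coordinate affected by them.

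\textbf{Step 1: $b_1=0$.} Compare the $e_6$-coordinate of $\varphi(e_2e_5)=\varphi(e_8)$ with that of $\varphi(e_2)\varphi(e_5)$. The left side has no $e_6$-component, since $\varphi(e_8)\in F^5$. On the right, $e_6$ can only arise as $e_1e_4$ or $-e_4e_1$. The leading computation gives $b_1^2(a_2b_1-a_1b_2)=0$, so $b_1=0$, because $\Delta\ne0$. Consequently $a_1b_2\ne0$, and $\varphi$ is lower triangular with respect to the weight filtration. Its diagonal is $a_1,b_2,a_1b_2,a_1^2b_2,a_1b_2^2,a_1^3b_2,a_1^2b_2^2,a_1^4b_2$ on $e_1,\dots,e_8$.

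\textbf{Step 2: three equations for $a_1,a_2,b_2$.} We use relations landing in $\langle e_7,e_8\rangle$, where each side is a scalar multiple of the relevant diagonal entry plus corrections that can be tracked explicitly.

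First, the $e_7$-coordinate of $\varphi(e_2)\varphi(e_2)=\varphi(-6e_7-e_8)$. By Proposition~\ref{prop:structure}(f), $Q_7(\varphi(e_2))=-6b_2^2+2b_2b_4-b_3^2$. This must equal $-6a_1^2b_2^2$ plus whatever contribution to the $e_7$-coordinate of $\varphi(e_7)$ comes from the $a_3,a_4,b_3,b_4$ parts. I expect these corrections to cancel, since $\varphi(e_7)_7$ is exactly the weight-$4$ diagonal entry, giving $a_1^2=1$.

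Second, the $e_8$-coordinates of $\varphi(e_3e_4)$ and $\varphi(e_7e_1)$, as in the proof of Theorem~\ref{thm:Lt}(b) with $s=t=0$. These give $a_1^3b_2(u-b_2)=0$ with $u=a_1+2a_2$, hence $b_2=a_1+2a_2$.

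Third, the $e_8$-coordinate of $\varphi(e_1)\varphi(e_1)=4\varphi(e_8)$. Its left side is $Q_8(\varphi(e_1))$, whose leading part is $4a_1^2+a_1a_2-a_2^2$, and the right side is $4a_1^4b_2$. Further candidates if needed are $e_1e_5=6e_8$, $e_2e_6=2e_8$ and $e_4e_2=-e_7-4e_8$. Combining these should force $a_1=b_2=1$ and $a_2=0$, which is precisely $\varphi(e_i)\equiv e_i \pmod{L^2}$ for $i=1,2$.

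The first claim of the theorem is the special case of an automorphism.

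\textbf{Main obstacle.} The hard part is Step 2. The $e_8$-coordinates of the relevant products involve not only $a_1,a_2,b_2$ but also lower-weight corrections coming from $a_3,\dots,a_7$ and $b_3,\dots,b_7$, through the non-homogeneous $e_8$-terms of~\eqref{eq:coords}; this is exactly why $L$ is not graded. I would first try to choose relations in which these corrections cancel identically. In particular, relations whose right factor is $e_1$ or $e_2$ only pick up $a_i,b_i$ for small $i$. Failing that, I would solve the $e_7$-level relations for $b_3,b_4$ in terms of $a_1,a_2,b_2$ before substituting into the $e_8$-level ones. The whole elimination is a finite polynomial computation and can be confirmed by exact computer algebra, as in Theorem~\ref{thm:pre}.
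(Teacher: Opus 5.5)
Your Step~1 and your derivation of $b_2=a_1+2a_2$ from the $e_8$-coordinate of $\varphi(e_3e_4)$ (equivalently $\varphi(e_7e_1)$) are correct and coincide with the paper's proof. The gap is in the other two equations of Step~2: neither of them constrains $a_1,a_2,b_2$.

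\emph{The $e_7$-coordinate of $\varphi(e_2)\varphi(e_2)=-6\varphi(e_7)-\varphi(e_8)$.} Nothing on the right absorbs the corrections. With $b_1=0$ one has $\varphi(e_4)\in a_1^2b_2e_4+a_1a_2b_2e_5+\langle e_6,e_7,e_8\rangle$, and only $e_2e_4=3e_7$ contributes to the $e_7$-coordinate of $\varphi(e_2)\varphi(e_4)$. So $\varphi(e_7)_7=a_1^2b_2^2$ exactly, and the equation reads $-6b_2^2+2b_2b_4-b_3^2=-6a_1^2b_2^2$. This only fixes $b_4$. On $\Aut(L)$ it is the relation $2b_4=b_3^2$, where $b_3$ is a genuinely free parameter ($b_3=-c_1$ for $\exp R_c$). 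Hence neither this equation nor an elimination expressing $b_3$ through $a_1,a_2,b_2$ can yield $a_1^2=1$.

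\emph{The $e_8$-coordinate of $\varphi(e_1)\varphi(e_1)=4\varphi(e_8)$.} Its left side contains $4a_1a_5-a_1a_7-6a_2a_3-4a_2a_4+3a_2a_6+2a_3^2-a_3a_4$, so it only determines $a_7$. Its right side is $4(a_1+2a_2)a_1^3b_2$, not $4a_1^4b_2$. Your diagonal entry for $e_8$ omits the contribution of $e_2e_6=2e_8$, which is exactly where $u=a_1+2a_2$ in the equation you quote comes from.

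Your fallback candidates do not close the gap either. $e_2e_6=2e_8$ reproduces $b_2=u$. Both $e_1e_5=6e_8$ and $e_4e_2=-e_7-4e_8$ give only $a_2=6a_1^2-6a_1$. That leaves a one-parameter family in $a_1$.

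The missing observation is this. Once $b_1=0$, you have $\varphi(e_5)\in a_1b_2^2e_5+\Ann_r(L)$ and $\varphi(e_6)\in a_1^3b_2e_6+\Ann_r(L)$. Also $xe_5=(6x_1+x_2)e_8$ and $xe_6=(x_1+2x_2)e_8$. So every product with right factor $\varphi(e_5)$ or $\varphi(e_6)$ is free of higher coordinates, and $\varphi(e_8)=(a_1+2a_2)a_1^3b_2e_8$. Besides $e_3e_4$, the paper uses the $e_8$-coordinates of $\varphi(e_2e_5)$ and $\varphi(e_1e_5)$:
\[a_1b_2^3=(a_1+2a_2)a_1^3b_2,\qquad (6a_1+a_2)a_1b_2^2=6(a_1+2a_2)a_1^3b_2 .\]
With $b_2=a_1+2a_2$ these give $b_2=a_1^2$ and $a_2=6a_1^2-6a_1$. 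Then $a_1^2=a_1+12a_1^2-12a_1$, i.e.\ $11a_1(a_1-1)=0$, so $a_1=b_2=1$ and $a_2=0$.

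The relation you are missing is the $e_8$-coordinate of $e_2e_5=e_8$; you used only its $e_6$-coordinate. It is what fixes $b_2=a_1^2$ and closes the argument.
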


\begin{proof}
Write $A=\varphi(e_1)=\sum a_ie_i$, $B=\varphi(e_2)=\sum b_ie_i$ and $\Delta=a_1b_2-a_2b_1\ne0$. By Proposition~\ref{prop:structure}(a), $\varphi(e_3)=AB$, $\varphi(e_4)=A\varphi(e_3)$, $\varphi(e_5)=B\varphi(e_3)$, $\varphi(e_6)=A\varphi(e_4)$ and $\varphi(e_8)=A\varphi(e_6)$, and these vectors can be computed from~\eqref{eq:coords}. We use four coordinate equations of the homomorphism property.

\emph{The $e_6$-coordinate of $\varphi(e_2e_5)=\varphi(e_2)\varphi(e_5)$.} Since $e_2e_5=e_8$ and $\varphi(e_8)\in L^5=\C e_8$, the left side has $e_6$-coordinate $0$. Computing the right side with~\eqref{eq:coords} gives $b_1^2(a_2b_1-a_1b_2)=0$, i.e.\ $-b_1^2\Delta=0$, so $b_1=0$. Then $\Delta=a_1b_2$, so $a_1\neq0\neq b_2$.

\emph{The $e_8$-coordinates of $\varphi(e_3e_4)$, $\varphi(e_2e_5)$ and $\varphi(e_1e_5)$.} Using $b_1=0$, a direct computation from~\eqref{eq:coords} gives
\begin{align*}
a_1^3b_2\,(a_1+2a_2-b_2)&=0,\\
a_1b_2\,(a_1^3+2a_1^2a_2-b_2^2)&=0,\\
6a_1^4b_2+12a_1^3a_2b_2-6a_1^2b_2^2-a_1a_2b_2^2&=0.
\end{align*}
Dividing by the nonzero factors we obtain successively $b_2=a_1+2a_2$, then $b_2(b_2-a_1^2)=0$, i.e.\ $b_2=a_1^2$, and then $a_1b_2^2(6a_1^2-6a_1-a_2)=0$, i.e.\ $a_2=6a_1^2-6a_1$. Substituting, $a_1^2=b_2=a_1+2a_2=a_1+12a_1^2-12a_1$, i.e.\ $11a_1(a_1-1)=0$. Hence $a_1=1$, $a_2=0$, $b_2=1$.
\end{proof}

Theorem~\ref{thm:aut-quotient} shows that two generators can only be moved by elements of $L^2$. The following theorem shows that the exponentials of inner derivations entirely account for the remaining freedom.

\begin{theorem}\label{thm:aut}
\begin{enumerate}
\item[(a)] Every automorphism of $L$ is unipotent: $(\varphi-\id)(L^k)\subseteq L^{k+1}$ for all $k$, so $(\varphi-\id)^5=0$.
\item[(b)] $\Aut(L)=\{\exp R_a: a\in L\}=\exp(\Der(L))$, and $\exp R_a=\exp R_b$ if and only if $a-b\in\langle e_7,e_8\rangle$.
\item[(c)] $\exp\colon\Der(L)\to\Aut(L)$ is a bijection, with inverse $\log$, and
\[\exp(D)\exp(D')=\exp\Bigl(D+D'+\tfrac12[D,D']+\tfrac1{12}[D,[D,D']]-\tfrac1{12}[D',[D,D']]-\tfrac1{24}[D',[D,[D,D']]]\Bigr).\]
Consequently $\Aut(L)$ is a connected six-dimensional unipotent algebraic group, nilpotent of class $4$, isomorphic to the simply connected complex Lie group $N_6$ with Lie algebra $\n_6$ (the group $\n_6$ with the Baker--Campbell--Hausdorff product). In particular $\Aut(L)$ coincides with the group generated by the exponentials of inner derivations, and $\Aut(L)/[\Aut(L),\Aut(L)]\cong\C^2$.
\end{enumerate}
\end{theorem}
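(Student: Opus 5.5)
The argument has three steps: show that every automorphism is unipotent, take its logarithm and identify it with an inner derivation, and then transport the group structure along $\exp$.

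\textbf{Step 1 (part (a)).} By Theorem~\ref{thm:aut-quotient}, $\varphi(e_1)=e_1+u$ and $\varphi(e_2)=e_2+v$ with $u,v\in L^2$. By Proposition~\ref{prop:structure}(b) we have $L^k=F^k$, and $L^k$ is spanned by right-normed products $(\cdots(x_{1}x_{2})\cdots)x_{k}$ of $k$ factors from $\{e_1,e_2\}$ modulo $L^{k+1}$. Indeed, $L^{k}=L^{k-1}L$, and $L^{k-1}L^2\subseteq L^{k+1}$ by $L^iL^j\subseteq L^{i+j}$. Apply $\varphi$ to such a product and expand each factor $e_i+(\text{element of }L^2)$. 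Every term other than the original product involves at least one factor from $L^2$, so by $L^iL^j\subseteq L^{i+j}$ it lies in $L^{k+1}$. Hence $(\varphi-\id)(L^k)\subseteq L^{k+1}$. Since $L^6=0$, this gives $(\varphi-\id)^5=0$.

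\textbf{Step 2 (part (b)).} Put $N=\varphi-\id$ and $D=\log\varphi=\sum_{m=1}^{4}(-1)^{m+1}N^m/m$; the sum is finite because $N^5=0$. We must show that $D$ is a derivation. I would use the standard argument: consider $\varphi^s=\exp(sD)$ for $s\in\C$. For integer $s$, $\varphi^s$ is an automorphism, so the identity $\varphi^s(xy)=\varphi^s(x)\varphi^s(y)$ holds. Both sides are polynomial in $s$ because $D$ is nilpotent, so the identity holds for all $s\in\C$. Differentiating at $s=0$ gives $D(xy)=D(x)y+xD(y)$.

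Conversely, for any nilpotent derivation $D$, Leibniz's rule $D^n(xy)=\sum\binom{n}{j}D^j(x)D^{n-j}(y)$ shows that $\exp D$ is a homomorphism, with inverse $\exp(-D)$. Combining the two directions with Theorem~\ref{thm:der} gives $\Aut(L)=\exp(\Der(L))=\{\exp R_a\}$.

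The maps $\exp$ and $\log$ are mutually inverse between nilpotent operators and unipotent operators; this is a formal power series identity, valid because all series truncate. Hence $\exp R_a=\exp R_b$ if and only if $R_a=R_b$, which happens if and only if $a-b\in\Ann_r(L)=\langle e_7,e_8\rangle$ by Proposition~\ref{prop:structure}(c).

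\textbf{Step 3 (part (c)).} Bijectivity of $\exp$ follows from Step 2. For the product formula, $\Der(L)\cong\n_6$ is nilpotent of class $4$ by Theorem~\ref{thm:der}, so the Baker--Campbell--Hausdorff series terminates after brackets of length $4$. I would write out the BCH series through degree $4$: $\tfrac12[D,D']$, then $\tfrac1{12}([D,[D,D']]-[D',[D,D']])$, then $-\tfrac1{24}[D',[D,[D,D']]]$. The key point is that the BCH series holds as an identity of nilpotent matrices whenever the associative algebra generated by $D$ and $D'$ is nilpotent. Here it is: every product of $5$ right multiplications kills $L$, because each $R_a$ raises the filtration by one. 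Thus the truncated BCH formula holds.

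Consequently $\Aut(L)$ is isomorphic to $(\n_6,\text{BCH})$, which is the simply connected group $N_6$. This group is unipotent, algebraic, connected, and of class $4$. Its abelianization is $\n_6/[\n_6,\n_6]\cong\C^2$, using that the commutator subgroup corresponds under $\exp$ to $[\n_6,\n_6]=\langle x_3,\dots,x_6\rangle$, a standard fact for nilpotent BCH groups.

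\textbf{Main obstacle.} I expect the delicate point to be the unipotency in Step 1, which depends on Theorem~\ref{thm:aut-quotient} forcing identity on $L/L^2$ rather than merely some unipotent action there. The remaining steps are standard nilpotent Lie theory.
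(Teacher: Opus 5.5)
Your proposal is correct and follows essentially the same route as the paper: unipotency comes from Theorem~\ref{thm:aut-quotient} together with $L^iL^j\subseteq L^{i+j}$, then $\log\varphi$ is shown to be a derivation by the polynomial-in-$s$ argument, and the group structure is transported along the truncated BCH series. For part (a) you expand $\varphi$ on products of the generators, whereas the paper inducts via $\varphi(xy)-xy=(\varphi(x)-x)\varphi(y)+x(\varphi(y)-y)$; your version silently uses $(\varphi-\id)(L^{k+1})\subseteq L^{k+1}$, which holds because $\varphi$ preserves the lower central series, and your justification of the BCH truncation (products of five right multiplications vanish) is slightly more explicit than the paper's.
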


\begin{proof}
(a) By Theorem~\ref{thm:aut-quotient}, $(\varphi-\id)(L)\subseteq L^2$. Suppose $(\varphi-\id)(L^j)\subseteq L^{j+1}$ for $j\le k$. For $x\in L^k$, $y\in L$,
\[\varphi(xy)-xy=(\varphi(x)-x)\varphi(y)+x(\varphi(y)-y)\in L^{k+1}L+L^kL^2\subseteq L^{k+2},\]
using Proposition~\ref{prop:structure}(b). As $L^{k+1}=L^kL$, the claim follows by induction, and $(\varphi-\id)^5(L)\subseteq L^6=0$.
(b) For a nilpotent $D\in\End(L)$ the series $\exp D=\sum_{k\ge0}D^k/k!$ is a finite sum, and $\exp D$ is invertible with inverse $\exp(-D)$. If $D$ is moreover a derivation, the Leibniz rule $D^n(xy)=\sum_{k=0}^n\binom nk D^k(x)D^{n-k}(y)$, proved by induction on $n$, gives
\[\exp D(xy)=\sum_{n\ge0}\ \sum_{k+l=n}\frac{D^k(x)}{k!}\,\frac{D^l(y)}{l!}=\exp D(x)\,\exp D(y),\]
so $\exp D\in\Aut(L)$. By the proof of Theorem~\ref{thm:der}, every $R_a$ is a nilpotent derivation ($R_a^5=0$), so each $\exp R_a$ is an automorphism. Conversely, let $\varphi\in\Aut(L)$. By (a), $N=\varphi-\id$ is nilpotent, so $D=\log\varphi=\sum_{k\ge1}(-1)^{k+1}N^k/k$ is a finite sum, $D$ is nilpotent, and $\exp D=\varphi$ by the identity of formal power series $\exp(\log(1+X))=1+X$. For $s\in\C$ put $\varphi_s=\exp(sD)$. For fixed $x,y\in L$ the vector $\varphi_s(xy)-\varphi_s(x)\varphi_s(y)$ depends polynomially on $s$, because $D$ is nilpotent, and it vanishes for all $s\in\mathbb{Z}_{\ge0}$, because $\varphi_n=(\exp D)^n=\varphi^n$ is an automorphism. Hence it vanishes identically, and differentiating at $s=0$ gives $D(xy)=D(x)y+xD(y)$. Thus $D$ is a derivation, so $D=R_a$ for some $a\in L$ by Theorem~\ref{thm:der}, and $\varphi=\exp R_a$. Since $\exp$ is injective on nilpotent endomorphisms (with inverse $\log$), $\exp R_a=\exp R_b$ iff $R_a=R_b$ iff $a-b\in\Ann_r(L)=\langle e_7,e_8\rangle$.
(c) The Baker--Campbell--Hausdorff series for the nilpotent Lie algebra $\Der(L)\subseteq\End(L)$ of class $4$ terminates after the terms of degree $4$, which are those displayed; the formula holds as an identity of polynomials in nilpotent matrices generating a nilpotent Lie algebra. Thus $\exp$ is a group isomorphism from $(\Der(L),\ast_{\mathrm{BCH}})$ onto $\Aut(L)$; composing with the isomorphism $\Der(L)\cong\n_6$ of Theorem~\ref{thm:der} gives $\Aut(L)\cong N_6$. The group $(\n_6,\ast_{\mathrm{BCH}})$ is the simply connected Lie group of $\n_6$, it is unipotent algebraic (the multiplication is polynomial), connected, of dimension $6$ and nilpotent of class $4$; its abelianization is $\n_6/[\n_6,\n_6]\cong\C^2$.
\end{proof}

\begin{remark}
Explicitly, for $a=\sum a_ie_i$ the automorphism $\exp R_a$ is given on generators by $e_j\mapsto e_j+e_ja+\tfrac12(e_ja)a+\tfrac16((e_ja)a)a+\tfrac1{24}(((e_ja)a)a)a$, $j=1,2$; e.g.\ the $e_3$-coordinates of $\exp R_a(e_1)$ and $\exp R_a(e_2)$ are $a_2$ and $-a_1$. The six parameters $a_1,\dots,a_6$ thus give global coordinates on $\Aut(L)$.
\end{remark}

\section{Quasiderivations and generalized derivations}\label{sec:quasi}

Following Leger and Luks~\cite{LegerLuks}, the derivation algebra of an algebra sits inside a chain of successively larger spaces of operators,
\[\Der(A)\subseteq\QDer(A)\subseteq\GDer(A).\]
These spaces are interesting because they are much more flexible than $\Der(A)$, and the question is how much of the rigidity of $L$ they retain. Before stating the results it is useful to note what every algebra must contain. For any algebra $A$ with centre $Z(A)$ one has
\[\Der(A)+\C\,\id+\Hom(A,Z(A))\subseteq\QDer(A),\]
with $f''=2c\,\id+D$ for $f=c\,\id+D+g$ ($c\in\C$, $D\in\Der(A)$, $g\in\Hom(A,Z(A))$), since $g(x)y=0=xg(y)$.
We shall see that for $L$ the quasiderivation algebra consists \emph{exactly} of these unavoidable operators, whereas the generalized derivations contain genuinely new ones.

Throughout this section we use the vectors
\[u_1=\tfrac1{11}(2e_5-e_6),\qquad u_2=\tfrac1{11}(6e_6-e_5),\]
which satisfy $xu_1=x_1e_8$ and $xu_2=x_2e_8$ for all $x\in L$, because $xe_5=(6x_1+x_2)e_8$ and $xe_6=(x_1+2x_2)e_8$ by~\eqref{eq:coords}. For a linear form $f$ and a vector $v$ we write $f\otimes v$ for the map $x\mapsto f(x)v$, and $e_i^*$ denotes the dual basis.

\subsection{Quasiderivations}

\begin{theorem}\label{thm:qder}
$\QDer(L)=\Inn(L)+\C\,\id+\Hom(L,\C e_8)$, and $\dim\QDer(L)=13$. More precisely, $f\in\QDer(L)$ if and only if $f=c\,\id+R_a+g$ with $c\in\C$, $a\in L$ and $g\in\Hom(L,\C e_8)$; in this case $f''=2c\,\id+R_a$ satisfies $f''(xy)=f(x)y+xf(y)$. The intersection $\Inn(L)\cap\Hom(L,\C e_8)$ equals $\langle R_{e_5},R_{e_6}\rangle$, and $\QDer(L)$ is a Lie subalgebra of $\End(L)$ in which $\Hom(L,\C e_8)$ is an abelian ideal.
\end{theorem}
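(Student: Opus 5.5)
The inclusion $\supseteq$ is the general observation made just before the theorem. Since $Z(L)=\C e_8$ by Proposition~\ref{prop:structure}(c), every $f=c\,\id+R_a+g$ is a quasiderivation, with $f''=2c\,\id+R_a$. The work is therefore the reverse inclusion, followed by a dimension count and the structural claims.

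For the reverse inclusion I would exploit the weight filtration $F^\bullet$ and the generation by $e_1,e_2$. Let $(f'',f,f)\in\TDer(L)$. First, $f''$ is determined on $L^2$ by $f$, since $f''(xy)=f(x)y+xf(y)$; I will not need $f''$ on $e_1,e_2$. Next, I reduce $f$ modulo the known quasiderivations. Writing the component of $f$ on $L/L^2$ as a $2\times2$ matrix $M$, I first show that $M$ is scalar. This should follow from the relations $e_1e_1=4e_8$ and $e_2e_2=-6e_7-e_8$, together with consistency checks such as $e_2e_1=-e_3+\dots$ versus $e_1e_2=e_3$. Indeed, $f''(e_1e_2)$ and $f''(e_2e_1)$ must be compatible, because $e_1e_2+e_2e_1\in\langle e_7,e_8\rangle$. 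Subtracting $c\,\id$, I may assume $f(L)\subseteq L^2$. The components of $f(e_1),f(e_2)$ along $e_3,\dots,e_6$ can then be cancelled by subtracting a suitable $R_a$. I would use the remark after Theorem~\ref{thm:aut} that $R_a(e_1),R_a(e_2)$ have $e_3$-coordinates $a_2,-a_1$, continuing in this way through the weights.

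After these subtractions, $f$ raises weight by at least $2$ on the generators. I would then run a weight-by-weight induction: the component of $f$ that raises weight by exactly $k$ satisfies the homogeneous ternary equations from the graded pieces of the table. I expect to show successively that it must vanish, except for maps into $\C e_8$, and except for the freedom already absorbed by $R_{e_5},R_{e_6}$ via $u_1,u_2$. This is a finite linear system: $3\cdot64$ unknowns in $f,f''$ (the third operator equals $f$) and $512$ equations. In practice, as in Theorem~\ref{thm:pre}, I would solve it with exact rational arithmetic and verify that the solution space for $f$ has dimension $13$.

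The main obstacle is this last elimination, and specifically ruling out non-scalar diagonal parts on $L/L^2$. For Lie-type graded algebras such parts do produce quasiderivations; here they should fail only through the lower-order terms $4e_8$, $-9e_7$, $-6e_7$, and so on, which break the grading, much as in Theorem~\ref{thm:aut-quotient}. I would first try the $e_7$- and $e_8$-coordinates of $f''$ applied to $e_1e_2+e_2e_1=-9e_7+e_8$ and to $e_2e_2$, where weights collide.

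For the dimension count, $\dim\Inn(L)=6$, $\dim\Hom(L,\C e_8)=8$, and $\id\notin\Inn(L)+\Hom(L,\C e_8)$, since the latter consists of nilpotent maps. The intersection $\Inn(L)\cap\Hom(L,\C e_8)$ consists of those $R_a$ with $La\subseteq\C e_8$. By \eqref{eq:coords}, and the computation of $e_1a,e_2a$ in Proposition~\ref{prop:structure}(c), this forces $a_1=\dots=a_4=0$, so the intersection is $\langle R_{e_5},R_{e_6}\rangle$. Hence $\dim\QDer(L)=6+1+8-2=13$.

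Finally, for the Lie structure: $[R_a,g]=R_a g-gR_a$. Here $R_a g=0$ because $e_8\in\Ann_r(L)$ gives $R_a(e_8)=e_8a=0$, and $gR_a\in\Hom(L,\C e_8)$. Also $[\id,\cdot]=0$, $[R_a,R_b]=R_{ba}$, and $[g,g']=0$ because $g(e_8)\in\C e_8$ composes to give $g g'=g'(\cdot)g(e_8)$. Checking this last term carefully, $gg'-g'g=g'(\cdot)g(e_8)-g(\cdot)g'(e_8)$, which is not obviously zero. In that case the $\Hom(L,\C e_8)$ part is still an ideal, and abelianness will need a separate argument, presumably one that treats the scalar values $g(e_8)$ directly.
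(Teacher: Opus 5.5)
The point where your proposal stops is a point where the statement itself is false, so no ``separate argument'' for abelianness exists. Your commutator formula is correct. For $g=\lambda\otimes e_8$ and $g'=\lambda'\otimes e_8$ it gives $[g,g']=(\lambda(e_8)\lambda'-\lambda'(e_8)\lambda)\otimes e_8$, which is in general nonzero. Concretely, $[e_8^*\otimes e_8,\,e_1^*\otimes e_8]=e_1^*\otimes e_8\neq0$. Both maps lie in $\QDer(L)$: take $f''=0$, since $e_8$ is central.

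The paper's proof simply asserts $[g,g']=0$, which is an error. What your argument does establish is that $\QDer(L)$ is a Lie subalgebra of $\End(L)$ and that $\Hom(L,\C e_8)$ is a (non-abelian) ideal. The correct abelian statement is that the $7$-dimensional subspace $\{g\in\Hom(L,\C e_8): g(e_8)=0\}$ is an abelian ideal of $\QDer(L)$:
\begin{itemize}
\item $gg'=0$ whenever $g(e_8)=0$;
\item $(gR_a)(e_8)=g(e_8a)=0$;
\item $[g',g](e_8)=-g(g'(e_8))=0$ for every $g'\in\Hom(L,\C e_8)$.
\end{itemize}

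The same maps with $g(e_8)\neq0$ break your justification that $\id\notin\Inn(L)+\Hom(L,\C e_8)$. That space does not consist of nilpotent maps, since $e_8^*\otimes e_8$ is a nonzero idempotent. The claim itself is true, because every element of $\Inn(L)+\Hom(L,\C e_8)$ maps $L$ into $L^2$ and $\id$ does not. So the count $6+1+8-2=13$ stands once you use that justification instead.

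The rest of your proposal agrees with the paper: the inclusion $\supseteq$, the intersection $\langle R_{e_5},R_{e_6}\rangle$, and, for the reverse inclusion, an exact solution of the linear system. Two corrections there:
\begin{itemize}
\item The system has $2\cdot64=128$ unknowns $(f'',f)$, not $3\cdot64$. The paper finds a $29$-dimensional solution space: $13$ dimensions for $f$ plus $16$ for the unconstrained values $f''(e_1),f''(e_2)$.
\item The weight-by-weight reduction you sketch beforehand is not carried out. For instance, the coordinates of $R_a(e_1),R_a(e_2)$ along $e_3,\dots,e_6$ form only a $4$-dimensional family, so even your cancellation step already needs the quasiderivation equations.
\end{itemize}
The reverse inclusion therefore rests on the machine computation, exactly as in the paper.
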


\begin{proof}
If $f=c\id+R_a+g$, then $f(x)y+xf(y)=2cxy+(R_a(x)y+xR_a(y))+(g(x)y+xg(y))=2cxy+R_a(xy)$, because $R_a$ is a derivation and $e_8$ is central. Conversely, let $(f'',f,f)\in\TDer(L)$. The defining identity on pairs of basis vectors is a homogeneous linear system in the $128$ entries of $(f'',f)$; its solution space has dimension $29$, consisting of the $13$-dimensional space of the statement together with the $16$-dimensional space of choices of $f''$ on $e_1,e_2$ (on which $f''$ is unconstrained since $e_1,e_2\notin L^2$). From~\eqref{eq:coords}, $R_{e_5}=6e_1^*\otimes e_8+e_2^*\otimes e_8$ and $R_{e_6}=e_1^*\otimes e_8+2e_2^*\otimes e_8$, while any $R_a$ with image in $\C e_8$ has $a_1=\dots=a_4=0$; this gives the intersection and $\dim\QDer(L)=6+1+8-2=13$. Finally $[R_a,g]$ and $[\id,g]=0$, $[g,g']=0$ map into $\C e_8$ as $R_a(e_8)=0$ and $e_8a=0$, so $\Hom(L,\C e_8)$ is an abelian ideal and $\QDer(L)$ is closed under commutators.
\end{proof}

Combining Theorem~\ref{thm:qder} with the list of unavoidable operators at the beginning of this section, we see that the absence of outer derivations persists at the level of quasiderivations: there are no ``outer quasiderivations'' beyond the scalars and the maps into the centre that every algebra possesses.

\subsection{Generalized derivations}
For generalized derivations the picture changes. The operator $f$ now appears together with two independent partners $f'$ and $f''$, which makes the defining identity much weaker, and new operators appear which are neither derivations nor maps into the centre. To describe them we first isolate those with image in $L^4$. Put $H=\GDer(L)\cap\Hom(L,L^4)$.

\begin{theorem}\label{thm:gder}
\begin{enumerate}
\item[(a)] $H=\Hom(L,\C e_8)\oplus\langle e_1^*\otimes e_7,e_2^*\otimes e_7,e_3^*\otimes e_7,e_4^*\otimes e_7\rangle\oplus\langle e_1^*\otimes e_6,e_2^*\otimes e_6,e_3^*\otimes e_6\rangle$ has dimension $15$; explicit partners are given in Table~\ref{tab:H}.
\item[(b)] $\GDer(L)=\QDer(L)+H+\langle G_a,G_b,G_c\rangle$, where $G_a,G_b,G_c$ are given (with partners) in Table~\ref{tab:G}; $\dim\GDer(L)=22$, and $\QDer(L)\cap H=\Hom(L,\C e_8)\oplus\C R_{e_4}$.
\item[(c)] The induced action of $\GDer(L)$ on $L/L^2$ is the $3$-dimensional algebra of lower triangular matrices: $f(e_1)\equiv\alpha e_1+\beta e_2$, $f(e_2)\equiv\gamma e_2$ modulo $L^2$, and all $(\alpha,\beta,\gamma)\in\C^3$ occur.
\end{enumerate}
\end{theorem}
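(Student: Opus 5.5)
The plan is to treat $(f'',f,f')\in\TDer(L)$ as a homogeneous linear system and to organise its solution by the weight filtration $F^\bullet$ of Proposition~\ref{prop:structure}(b). The system has $512$ equations in the $192$ entries of the three matrices, and $\GDer(L)$ is the projection of its solution space to the middle component. First I will isolate the two trivial sources of freedom. One is $f''$ on $e_1,e_2$, which is unconstrained because $e_1,e_2\notin L^2$. The other is the kernel of the projection to $f$, namely triples $(f'',0,f')$ with $f''(xy)=xf'(y)$; here $f'$ may be any map into $\Ann_r(L)=\langle e_7,e_8\rangle$, plus whatever else the rank computation gives. These are irrelevant for $\GDer$. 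The computation itself would be done by exact Gaussian elimination, as in the proofs of Theorems~\ref{thm:pre} and~\ref{thm:Lt}, but the proof should also exhibit explicit witnesses.

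For (a) I would proceed directly. If $f\in\Hom(L,\C e_8)$, then $(0,f,0)$ works, since $e_8$ is central. For $f=e_i^*\otimes e_7$ with $i\le4$, I use $e_7y=-y_1e_8$ and $xe_7=0$. The term $f(x)y=-x_iy_1e_8$ must then be matched by $f''(xy)-xf'(y)$. The formulas $xu_1=x_1e_8$ and $xu_2=x_2e_8$ suggest taking $f'$ with values in $\langle u_1,u_2\rangle$. For instance, for $i=1$ one tries $f'=-e_1^*\otimes\ldots$; more precisely, one seeks $f'$ so that $xf'(y)=-x_iy_1e_8$ modulo something of the form $f''(xy)$. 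For $i=1,2$ the choice $f'=-e_1^*\otimes u_i$ works with $f''=0$. For $i=3,4$, the product $x_3y_1$ is (up to sign) the $e_4$-coordinate of $xy$ minus $x_1y_3$, so I take $f''=\pm e_4^*\otimes e_8$ and correct $x_1y_3$ with $f'=e_3^*\otimes u_1$. The $e_6$ cases are handled the same way, using $e_6y=(-y_1+y_2)e_8$ and $xe_6=(x_1+2x_2)e_8$. The converse direction, that nothing else in $\Hom(L,L^4)$ lies in $\GDer$, follows by restricting the linear system to $f$ with image in $L^4=\langle e_6,e_7,e_8\rangle$. This gives $24$ unknowns, and the ranks show exactly $15$ survive. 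The excluded maps such as $e_4^*\otimes e_6$ or $e_5^*\otimes e_7$ would be ruled out by naming one specific coordinate equation that fails.

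For (b) and (c) I would filter by weight. A generalized derivation $f$ induces a map on $L/L^2=\langle\bar e_1,\bar e_2\rangle$. Comparing top-weight terms of the identity on $(e_1,e_2)$, $(e_2,e_1)$ and $(e_2,e_2)$, using $e_2e_2\in L^4$ and $e_1e_1\in L^5$, should force the $\bar e_1$-component of $f(e_2)$ to vanish, much as $b_1=0$ was forced in Theorem~\ref{thm:aut-quotient}. This gives the lower triangular shape. For surjectivity, $\id$ gives $\alpha=\gamma$, $R_a$ gives nothing new on $L/L^2$ by Theorem~\ref{thm:aut-quotient}-type reasoning, and the new operators $G_a,G_b,G_c$ of Table~\ref{tab:G} should supply the missing diagonal direction $\alpha\ne\gamma$ and the $\beta$ direction. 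Then, weight by weight, I subtract elements of $\QDer(L)$, $H$ and the $G$'s to reduce a general $f$ to one with image in $L^4$, which lies in $H$ by (a). The dimension count is then $22=13+15-\dim(\QDer\cap H)+3$. This uses $\QDer(L)\cap H=\Hom(L,\C e_8)\oplus\C R_{e_4}$, which needs $\dim=9$ together with three new independent $G$'s. That intersection is read off from Theorem~\ref{thm:qder}, since $R_a$ has image in $L^4$ iff $a_1=a_2=a_3=0$, and modulo $\Hom(L,\C e_8)$ only $R_{e_4}$ remains.

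The main obstacle is the middle weights, namely the components of $f$ mapping into $L^2/L^4$ and the construction of explicit partners $(f'',f')$ for $G_a,G_b,G_c$. There, the identity couples all three maps nontrivially through the non-graded corrections in \eqref{eq:table}. I would first solve the weight-graded (associated graded) system, then lift the solutions by correcting with terms of higher weight, and finally confirm the total dimension $22$ by an exact rank computation.
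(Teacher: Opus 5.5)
Your overall skeleton is the paper's: explicit partners for the lower bound, exact linear algebra on the $512$-equation system (projected onto the middle component) for the reverse inclusions and $\dim\GDer(L)=22$, and the count $22=13+15-9+3$ with the same computation of $\QDer(L)\cap H$. However, several of the steps you propose to do by hand fail as written.

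\emph{The partners in (a).} The partners for $e_1^*\otimes e_7$ and $e_2^*\otimes e_7$ have the wrong sign. Since $f(x)y=-x_iy_1e_8$, with $f''=0$ you need $xf'(y)=+x_iy_1e_8$, i.e.\ $f'=e_1^*\otimes u_i$ as in Table~\ref{tab:H}. Your $f'=-e_1^*\otimes u_i$ gives $f(x)y+xf'(y)=-2x_iy_1e_8\ne0$. The slip is the sentence ``one seeks $f'$ so that $xf'(y)=-x_iy_1e_8$'', which contradicts the sentence before it. For $i=4$ the relevant coordinate is $(xy)_6=x_1y_4-x_4y_1$, so the partners are $f''=e_6^*\otimes e_8$ and $f'=e_4^*\otimes u_1$, not $e_4^*\otimes e_8$. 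Also, a map cannot be excluded from $H$ by ``one specific coordinate equation that fails''. The partners are existentially quantified: restricting $f$ to $\Hom(L,L^4)$ still leaves all $128$ partner unknowns. So you need a consequence of the system from which $f'$ and $f''$ have been eliminated, and it must exclude every linear combination of the nine omitted maps modulo $H$. This is what the projection and rank computation provides.

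\emph{The lower triangular shape in (c).} This is the substantive gap. The identities on $(e_1,e_2)$, $(e_2,e_1)$, $(e_2,e_2)$ have left-hand sides $f''(e_3)$, $-f''(e_3)-9f''(e_7)+f''(e_8)$ and $-6f''(e_7)-f''(e_8)$. The last two combinations of $f''(e_7),f''(e_8)$ are independent (determinant $15$). So for \emph{any} $f,f'$ these three equations can be satisfied by choosing $f''(e_3),f''(e_7),f''(e_8)$, and by themselves they impose no condition on $f$. Even granting that $f''$ respects the filtration, the $(e_2,e_2)$ equation modulo $L^3$ only says that the $\bar e_1$-components of $f(e_2)$ and $f'(e_2)$ coincide, not that they vanish. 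A condition on $f$ arises only when these equations are combined with higher-weight ones through the linear relations among products. Examples are $e_8=e_1e_6=-e_6e_1=e_2e_5=e_3e_4$, $e_7=\tfrac13e_2e_4=-e_4e_2-4e_8$, and zero products such as $e_4e_4=0$. Likewise $b_1=0$ in Theorem~\ref{thm:aut-quotient} came from the $e_6$-coordinate of $\varphi(e_2e_5)$, not from products of generators. The paper does not attempt this by hand; it reads (c) off the computed $22$-dimensional space. Since your weight-by-weight reduction starts from this step, it does not give the upper bound; only your final rank computation does.

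\emph{Independence of $G_c$.} For the lower bound you need $G_c$ to be independent modulo $\QDer(L)+H$. But $G_c$ induces $0$ on $L/L^2$, so the ``missing $\alpha\ne\gamma$ and $\beta$ directions'' account only for $G_a$ and $G_b$. The paper's invariant closes this. An element $c\,\id+R_a+g+h\in\QDer(L)+H$ acts on $L/L^2$ as $c\,\id$. For $c=0$ it induces $0$ on $L^2/L^3$, because $R_a(L^2)\subseteq L^3$ and $g,h$ map into $L^4$. By contrast, $G_c(e_3)=-e_3+\tfrac72e_4$.
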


\begin{table}[ht]
\centering
\caption{Generalized derivations in $H$ with partners $(f'',f,f')$, i.e.\ $f''(xy)=f(x)y+xf'(y)$.}\label{tab:H}
\begin{tabular}{lll}
\toprule
$f''$ & $f$ & $f'$\\
\midrule
$0$ & $g\in\Hom(L,\C e_8)$ & $0$\\
$0$ & $e_1^*\otimes e_6$ & $(e_1^*-e_2^*)\otimes u_1$\\
$0$ & $e_2^*\otimes e_6$ & $(e_1^*-e_2^*)\otimes u_2$\\
$(e_4^*-e_5^*)\otimes e_8$ & $e_3^*\otimes e_6$ & $e_3^*\otimes(u_1-u_2)$\\
$0$ & $e_1^*\otimes e_7$ & $e_1^*\otimes u_1$\\
$0$ & $e_2^*\otimes e_7$ & $e_1^*\otimes u_2$\\
$e_4^*\otimes e_8$ & $e_3^*\otimes e_7$ & $e_3^*\otimes u_1$\\
$e_6^*\otimes e_8$ & $e_4^*\otimes e_7$ & $e_4^*\otimes u_1$\\
\bottomrule
\end{tabular}
\end{table}

\begin{proof}
We first verify Table~\ref{tab:H}. Recall $e_6x=(x_2-x_1)e_8$, $e_7x=-x_1e_8$ and $xu_i=x_ie_8$. For $f=e_1^*\otimes e_6$, $f'=(e_1^*-e_2^*)\otimes u_1$: $f(x)y+xf'(y)=x_1(y_2-y_1)e_8+(y_1-y_2)x_1e_8=0$. For $f=e_2^*\otimes e_6$ the computation is the same with $u_2$. For $f=e_1^*\otimes e_7$: $x_1(-y_1)e_8+y_1x_1e_8=0$, and similarly for $e_2^*\otimes e_7$. For $f=e_3^*\otimes e_7$, $f'=e_3^*\otimes u_1$: $f(x)y+xf'(y)=(-x_3y_1+y_3x_1)e_8=(xy)_4e_8=f''(xy)$ with $f''=e_4^*\otimes e_8$. For $f=e_4^*\otimes e_7$: $(-x_4y_1+y_4x_1)e_8=(xy)_6e_8$. For $f=e_3^*\otimes e_6$, $f'=e_3^*\otimes(u_1-u_2)$: $x_3(y_2-y_1)e_8+y_3(x_1-x_2)e_8=((xy)_4-(xy)_5)e_8$. Elements of $\Hom(L,\C e_8)$ are even quasiderivations. Hence the space in (a) is contained in $H$. Table~\ref{tab:G} is verified in the same way, by evaluating both sides on the $64$ pairs of basis vectors. The reverse inclusions and the dimension $22$ are obtained by solving the homogeneous linear system $f''(e_ie_j)=f(e_i)e_j+e_if'(e_j)$ ($1\le i,j\le8$), consisting of $512$ scalar equations in the $192$ entries of $(f'',f,f')$, and projecting its solution space onto the middle component $f$; this was done with exact rational arithmetic by computer. The intersection $\QDer(L)\cap H$ consists of those $c\id+R_a+g$ with image in $L^4$, i.e.\ $c=0$ and $R_a(L)\subseteq L^4$, i.e.\ $a\in\langle e_4,\dots,e_8\rangle$; as $R_{e_4}=e_1^*\otimes e_6+3e_2^*\otimes e_7+e_3^*\otimes e_8$ and $R_{e_5},R_{e_6}\in\Hom(L,\C e_8)$, we get $\Hom(L,\C e_8)\oplus\C R_{e_4}$. Thus $\dim(\QDer(L)+H)=13+15-9=19$. The elements $G_a,G_b,G_c$ are independent modulo this space: the action of $c\,\id+R_a+g+h$ ($g\in\Hom(L,\C e_8)$, $h\in H$) on $L/L^2$ is $c\,\id$, and if $c=0$ its action on $L^2/L^3$ is zero; on the other hand $G_a$, $G_b$ act on $L/L^2$ by $\bar e_1\mapsto\bar e_1$, $\bar e_1\mapsto\bar e_2$ (and $\bar e_2\mapsto0$), and $G_c$ acts trivially on $L/L^2$ but maps $e_3\mapsto-e_3+\tfrac72e_4$. Hence $\dim\GDer(L)\ge22$, and equality is the computational result just mentioned. Part (c) follows: $\id$, $G_a$, $G_b$ induce three independent lower triangular maps, and the computed $22$-dimensional space induces nothing else.
\end{proof}

\begin{table}[ht]
\centering
\caption{The generalized derivations $G_a,G_b,G_c$ (middle column $f$) with partners $f''$, $f'$ such that $f''(xy)=f(x)y+xf'(y)$. Basis vectors not listed are mapped to $0$.}\label{tab:G}
\small
\begin{tabular}{p{0.5cm}p{13.2cm}}
\toprule
\multicolumn{2}{l}{$G_a$}\\
$f''$: & $e_3\mapsto6e_8$, $e_5\mapsto-e_5$, $e_7\mapsto-e_7$, $e_8\mapsto-e_8$\\
$f$: & $e_1\mapsto e_1$, $e_2\mapsto-\tfrac92e_5$, $e_5\mapsto-e_5-e_6+e_7$, $e_6\mapsto-e_7$, $e_7\mapsto-e_7$\\
$f'$: & $e_1\mapsto-8e_6$, $e_2\mapsto-e_2+\tfrac32e_5-3e_6$, $e_3\mapsto-e_3$, $e_4\mapsto-e_4$, $e_5\mapsto-\tfrac{23}{11}e_5+\tfrac6{11}e_6$, $e_6\mapsto-\tfrac2{11}e_5-\tfrac{10}{11}e_6$\\
\midrule
\multicolumn{2}{l}{$G_b$}\\
$f''$: & $e_3\mapsto-6e_7+9e_8$, $e_4\mapsto e_5+6e_8$, $e_6\mapsto3e_7+7e_8$\\
$f$: & $e_1\mapsto e_2-\tfrac92e_5$, $e_2\mapsto-6e_5$, $e_4\mapsto e_5+e_6$, $e_5\mapsto-e_7$, $e_6\mapsto e_7$\\
$f'$: & $e_1\mapsto e_2-10e_6$, $e_2\mapsto\tfrac{17}{11}e_5-\tfrac{83}{22}e_6$, $e_3\mapsto\tfrac{12}{11}e_5-\tfrac6{11}e_6$, $e_4\mapsto\tfrac{14}{11}e_5-\tfrac7{11}e_6$, $e_5\mapsto-\tfrac2{11}e_5+\tfrac1{11}e_6$, $e_6\mapsto-\tfrac4{11}e_5+\tfrac2{11}e_6$\\
\midrule
\multicolumn{2}{l}{$G_c$}\\
$f''$: & $e_3\mapsto\tfrac72e_4+6e_6-9e_7+\tfrac{15}2e_8$, $e_4\mapsto-e_4+\tfrac72e_6-9e_8$, $e_5\mapsto-e_5+7e_7+13e_8$, $e_6\mapsto-2e_6+\tfrac{19}2e_8$, $e_7\mapsto-2e_7+\tfrac32e_8$, $e_8\mapsto-3e_8$\\
$f$: & $e_1\mapsto9e_4$, $e_2\mapsto\tfrac72e_3+6e_4$, $e_3\mapsto-e_3+\tfrac72e_4$, $e_4\mapsto-2e_4+\tfrac{19}2e_6$, $e_5\mapsto-2e_5+e_6$, $e_6\mapsto-3e_6$, $e_7\mapsto-3e_7$\\
$f'$: & $e_1\mapsto9e_4-12e_6$, $e_2\mapsto\tfrac72e_3+6e_4+\tfrac{48}{11}e_5+\tfrac{381}{22}e_6$, $e_3\mapsto-e_3+\tfrac72e_4+9e_6$, $e_4\mapsto-2e_4+\tfrac{18}{11}e_5-\tfrac7{22}e_6$, $e_5\mapsto-3e_5$, $e_6\mapsto-3e_6$\\
\bottomrule
\end{tabular}
\end{table}

\begin{remark}
The diagonal part of $G_c$ on $L^2$, namely $e_3\mapsto-e_3$, $e_4,e_5\mapsto-2e_4,-2e_5$, $e_6,e_7\mapsto-3e_6,-3e_7$, agrees with $\id-\delta$, where $\delta$ is the weight operator $e_i\mapsto w(e_i)e_i$. Thus, although $\delta$ is not a derivation (Theorem~\ref{thm:der}), a corrected version of it survives as a generalized derivation.
\end{remark}

\section{\texorpdfstring{$\sigma$}{Sigma}-derivations}\label{sec:sigma}

Twisted derivations, or $\sigma$-derivations, replace the identity map in the Leibniz rule by an endomorphism $\sigma$. They are the infinitesimal objects underlying quasi-Lie and Hom-Lie deformations~\cite{HLS}, and they appear naturally as derivatives of families of homomorphisms. For an automorphism $\sigma$ the $\sigma$-derivations are related to ordinary derivations by composition with $\sigma$, so in this case we expect the classification to be inherited from Theorem~\ref{thm:der}; this is the content of the following theorem.

\begin{theorem}\label{thm:sigma}
Let $\sigma\in\Aut(L)$. A linear map $d$ is a $\sigma$-derivation if and only if $d=\sigma\circ R_a$ for some $a\in L$; equivalently $d=R_{\sigma(a)}\circ\sigma$. The space of $\sigma$-derivations is $\sigma\circ\Der(L)$ and has dimension $6$. For $\sigma=\exp R_b$, every $\sigma$-derivation has the form $d=\exp(R_b)R_a$.
\end{theorem}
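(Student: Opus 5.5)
The argument reduces $\sigma$-derivations to ordinary derivations by untwisting with $\sigma^{-1}$, and then quotes Theorem~\ref{thm:der}.

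\textbf{Reduction.} Let $\sigma\in\Aut(L)$ and let $d$ be linear. Set $D=\sigma^{-1}\circ d$. Apply $\sigma^{-1}$ to the identity $d(xy)=d(x)\sigma(y)+\sigma(x)d(y)$. Since $\sigma^{-1}$ is an automorphism, this gives
\[
D(xy)=\sigma^{-1}(d(x))\,y+x\,\sigma^{-1}(d(y))=D(x)y+xD(y).
\]
The steps are reversible, so $d$ is a $\sigma$-derivation if and only if $\sigma^{-1}d\in\Der(L)$.

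\textbf{Classification.} By Theorem~\ref{thm:der}, $\Der(L)=\{R_a:a\in L\}$. Hence the $\sigma$-derivations are exactly the maps $d=\sigma\circ R_a$, and they form the space $\sigma\circ\Der(L)$. Because $\sigma$ is invertible, composition with $\sigma$ is a linear isomorphism of $\End(L)$, so this space has dimension $\dim\Der(L)=6$.

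\textbf{Second form.} For every $x$,
\[
\sigma(R_a(x))=\sigma(xa)=\sigma(x)\sigma(a)=R_{\sigma(a)}(\sigma(x)),
\]
so $\sigma\circ R_a=R_{\sigma(a)}\circ\sigma$. As $a$ runs over $L$, so does $\sigma(a)$, which gives the equivalent description $d=R_{\sigma(a)}\circ\sigma$.

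\textbf{Last assertion.} By Theorem~\ref{thm:aut}(b), every automorphism is of the form $\sigma=\exp R_b$. Substituting this into $d=\sigma\circ R_a$ gives $d=\exp(R_b)R_a$.

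There is no real obstacle. The only point needing care is that the untwisting step uses the multiplicativity of $\sigma^{-1}$, which holds because $\sigma$ is an automorphism. For a non-invertible endomorphism $\sigma$ the argument breaks down; it is outside the scope of this statement.
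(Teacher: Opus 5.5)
Your proof is correct and is essentially the paper's argument: you untwist by $\sigma^{-1}$, apply Theorem~\ref{thm:der}, and use $\sigma(xa)=\sigma(x)\sigma(a)$ to get the form $R_{\sigma(a)}\circ\sigma$. The only small difference is that the paper checks the converse (that $\sigma\circ R_a$ is a $\sigma$-derivation) directly from the Leibniz identity, which works for every endomorphism $\sigma$, whereas you reverse the untwisting step, which needs $\sigma$ invertible; this is harmless here since $\sigma\in\Aut(L)$.
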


\begin{proof}
If $d$ is a $\sigma$-derivation, then $D=\sigma^{-1}d$ satisfies $D(xy)=\sigma^{-1}(d(x)\sigma(y)+\sigma(x)d(y))=D(x)y+xD(y)$, since $\sigma^{-1}$ is an automorphism. Hence $D=R_a$ by Theorem~\ref{thm:der}. Conversely, for any endomorphism $\sigma$ and any $a$,
\[\sigma(R_a(xy))=\sigma((xy)a)=\sigma(x(ya)+(xa)y)=\sigma(x)\sigma(ya)+\sigma(xa)\sigma(y),\]
so $\sigma R_a$ is a $\sigma$-derivation. Finally $\sigma(xa)=\sigma(x)\sigma(a)$ gives $\sigma R_a=R_{\sigma(a)}\sigma$. The last assertion uses Theorem~\ref{thm:aut}.
\end{proof}

\begin{remark}
For non-invertible $\sigma$ the description changes; for instance for $\sigma=0$ the $0$-derivations are exactly the linear maps vanishing on $L^2$, a space of dimension $16$. The proof above shows that $\sigma\circ R_a$ is a $\sigma$-derivation for every endomorphism $\sigma$.
\end{remark}

\section{Deformations}\label{sec:def}

We finally study the neighbourhood of $L$ in the variety of eight-dimensional Leibniz algebra laws. An algebra is \emph{rigid} if every algebra law sufficiently close to it is isomorphic to it; for Lie algebras there is a well-known conjecture of Vergne that nilpotent Lie algebras are never rigid in the variety of all Lie algebra laws of the same dimension. Formal deformation theory gives a cohomological approach: by Gerstenhaber's principle, the infinitesimal deformations are classified by $\HL^2(L,L)$, which is four-dimensional by Theorem~\ref{thm:pre}(c), and the question is which of them extend to higher orders. Our results show that the integrable part is a single line, giving the family $L_t$ of Theorem~\ref{thm:Lt}.

By Theorem~\ref{thm:pre}(c), the infinitesimal deformations of $L$ up to equivalence form the four-dimensional space $\HL^2(L,L)=\langle[\psi_1],[\psi_2],[\psi_3],[\psi_4]\rangle$; after an equivalence, the infinitesimal part of any formal deformation can be assumed to be $\varphi_1=\sum_ic_i\psi_i$.

\begin{theorem}\label{thm:def}
Let $\varphi_1=c_1\psi_1+c_2\psi_2+c_3\psi_3+c_4\psi_4$.
\begin{enumerate}
\item[(a)] The truncated deformation $\mu+t\varphi_1$ extends to second order if and only if $c_1=c_2=0$.
\item[(b)] For $c_1=c_2=0$ and $c_3\ne0$, every extension of $\mu+t\varphi_1$ to third order is obstructed at order four.
\item[(c)] For $c_1=c_2=c_3=0$, the linear deformation $\mu_t=\mu+tc_4\psi_4$ is a genuine (convergent) deformation; it is the family $L_{c_4t}$ of Theorem~\ref{thm:Lt}, consisting of pairwise non-isomorphic Leibniz algebras.
\end{enumerate}
Consequently $L$ is not rigid, the only unobstructed infinitesimal deformations are the multiples of $[\psi_4]$, and every nontrivial formal deformation of $L$ whose infinitesimal part is nonzero has infinitesimal class in $\C[\psi_4]$.
\end{theorem}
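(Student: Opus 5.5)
The proof follows the obstruction recursion~\eqref{eq:defo} order by order, and reduces each step to a finite linear-algebra problem in the $512$-dimensional space $C^2$ of $2$-cochains.

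\emph{Part (a).} At order two we must decide when $\varphi_1\circ\varphi_1\in\delta(C^2)$. Write $\varphi_1=\sum c_i\psi_i$. Then $\varphi_1\circ\varphi_1=\sum_{i,j}c_ic_j\,\psi_i\circ\psi_j$ is a quadratic form in $c$ with values in $3$-cochains. The image $B^3=\delta(C^2)$ has dimension $512-62=450$. Its annihilator in the dual of the $3$-cochains is computed exactly, and an explicit spanning set $\lambda_1,\dots,\lambda_m$ of functionals vanishing on $B^3$ is chosen. The obstruction conditions are then the quadratic equations
\[q_k(c)=\sum_{i,j}c_ic_j\,\lambda_k(\psi_i\circ\psi_j)=0 .\]
Several terms vanish without computation. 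Since $\psi_4$ takes values in $\C e_8$ and vanishes when either argument is $e_8$, we have $\psi_4\circ\psi_4=0$. The same structure, together with the weight-raising behaviour of $\psi_3$, should make $\psi_3\circ\psi_4$, $\psi_4\circ\psi_3$ and $\psi_3\circ\psi_3$ exact or zero. The key point, and the first main obstacle, is to find two functionals, say $\lambda_1,\lambda_2$, for which the restricted forms $q_1,q_2$ have only the common zero $c_1=c_2=0$ (for instance $q_1=c_1^2+\dots$ and $q_2=c_2^2+\dots$ after a linear change of variables). I would look for these among functionals supported on coordinates of low weight, where $\psi_1$ and $\psi_2$ have values in $\langle e_3,e_4,e_5,e_6\rangle$ and interact with $\mu$ in only a few ways. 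Conversely, for $c_1=c_2=0$ I would exhibit explicit $\varphi_2$ with $\delta\varphi_2=-(c_3\psi_3+c_4\psi_4)^{\circ 2}$, which gives the \emph{if} direction.

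\emph{Part (b).} Take $c_1=c_2=0$ and $c_3\neq0$. The general second-order term is $\varphi_2=\varphi_2^0+z_2$, where $\varphi_2^0$ is a particular solution and $z_2\in Z^2$. Modulo equivalence, $z_2$ can be taken in $\langle\psi_1,\dots,\psi_4\rangle$. We then solve order three, which imposes conditions on $z_2$ and produces $\varphi_3=\varphi_3^0+z_3$. At order four we evaluate the class of $-(\varphi_1\circ\varphi_3+\varphi_3\circ\varphi_1+\varphi_2\circ\varphi_2)$ in $\HL^3(L,L)$, using the functionals $\lambda_k$, and show that one of them equals a nonzero multiple of a power of $c_3$ (e.g.\ $c_3^4$). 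The remaining parameters in $z_2$ and $z_3$ must be shown to drop out or to be forced by order three. This bookkeeping of free parameters is the step I expect to be the hardest. I would organise it using the weight grading: $\psi_3$ raises weight by a fixed amount, so each order involves only a few weight components.

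\emph{Part (c).} This follows directly from Theorem~\ref{thm:Lt}. The product $\mu+tc_4\psi_4$ equals $*_{c_4t}$, which is Leibniz for all $t$, so the series stops at order one and converges trivially. Pairwise non-isomorphism is Theorem~\ref{thm:Lt}(b). Non-rigidity and the final consequence follow: the infinitesimal class of any nontrivial deformation must satisfy (a), and (b) rules out $c_3\neq0$.
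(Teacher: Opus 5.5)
Your proposal follows essentially the same route as the paper. In (a) you reduce the quadratic obstruction $\varphi_1\circ\varphi_1$ modulo $B^3$ by exact linear algebra. The paper finds that $r_{ij}=\psi_i\circ\psi_j+\psi_j\circ\psi_i$ lies in $B^3$ for $(i,j)\in\{(1,4),(2,4),(3,3),(3,4),(4,4)\}$, while the classes of $r_{11},r_{12},r_{13},r_{22},r_{23}$ are independent; this yields exactly the separating functionals you are looking for. In (b) you write $\varphi_2=p_2+\sum b_i\psi_i$, let order three fix $b_1,b_2$, and show that the order-four class survives modulo $B^3$ and the classes $\varphi_1\circ\psi_i+\psi_i\circ\varphi_1$. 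Part (c) follows from Theorem~\ref{thm:Lt}.

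The one thing to drop is the weight-grading shortcut. $L$ carries only a filtration, not a grading (Remark~\ref{rem:class}; e.g.\ $e_1e_1=4e_8$). Moreover $\psi_3$ does not shift weight by a fixed amount: $\psi_3(e_3,e_2)=-66e_7$ raises weight by one, while $\psi_3(e_1,e_6)=10e_6$ lowers it by one. So the exactness of $r_{33}$ and $r_{34}$, and the parameter bookkeeping in (b), must come from the exact computation itself, as in the paper.
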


\begin{proof}
For $i\le j$ let $r_{ij}=\psi_i\circ\psi_j+\psi_j\circ\psi_i$ (and $r_{ii}=2\psi_i\circ\psi_i$). By~\eqref{eq:defo} with $n=2$, the second-order obstruction is $\varphi_1\circ\varphi_1=\tfrac12\sum_{i\le j}c_ic_jr_{ij}$, and we must decide whether it lies in $B^3=\delta(C^2)$, the image of the coboundary on the $512$-dimensional space of $2$-cochains; by Theorem~\ref{thm:pre}(c), $\dim B^3=512-62=450$. Computing $r_{ij}$ from the explicit cocycles and reducing modulo $B^3$ (exact linear algebra), one finds that $r_{14},r_{24},r_{33},r_{34},r_{44}\in B^3$, while the classes of $r_{11},r_{12},r_{13},r_{22},r_{23}$ are linearly independent modulo $B^3$. Hence the obstruction vanishes iff $c_1^2=c_1c_2=c_2^2=c_1c_3=c_2c_3=0$, i.e.\ iff $c_1=c_2=0$. This proves (a). (Note that $\psi_4\circ\psi_4=0$ holds even on the cochain level, see the proof of Theorem~\ref{thm:Lt}(a).)

For (b) normalize $c_3=1$ and write $\varphi_1=\psi_3+s\psi_4$. Choose $\varphi_2$ with $\delta\varphi_2=-\varphi_1\circ\varphi_1$; the general solution is $\varphi_2=p_2+\sum_ib_i\psi_i$ modulo coboundaries (which can be removed by an equivalence), with a fixed particular solution $p_2$. The third-order equation determines $b_1,b_2$ uniquely (the classes of $\varphi_1\circ\psi_i+\psi_i\circ\varphi_1$, $i=1,2$, are independent) and leaves $b_3,b_4$ free. The fourth-order right-hand side, reduced modulo $B^3$ and modulo the classes of $\varphi_1\circ\psi_i+\psi_i\circ\varphi_1$ coming from the freedom in $\varphi_3$, is a nonzero class independent of $s,b_3,b_4$. This computation was carried out by computer with exact rational arithmetic, treating $s,b_3,b_4$ as indeterminates.

(c) follows from Theorem~\ref{thm:Lt}. The final statements combine (a)--(c): $L_t\not\cong L$ for $t\ne0$ shows that $L$ is not rigid.
\end{proof}

\begin{remark}
The deformation $L_t$ changes only products with values in the centre: it is obtained from the trivial-coefficient $2$-cocycle $-\chi$, whose class in $\HL^2(L,\C)$ is non-zero because $[\psi_4]\ne0$. In contrast, $\psi_1$ and $\psi_2$ involve values outside the centre and are obstructed already at second order.
\end{remark}

From the point of view of the guiding question, Theorem~\ref{thm:def} together with Theorem~\ref{thm:Lt} says two things. First, $L$ is not rigid, so the rigidity of its symmetries does not translate into rigidity of the algebra law. Second, the unique integrable direction preserves the property of having only inner derivations, except at the single value $t=-1$; thus this property is an open condition along the deformation, and $L$ lies in the interior of a family of algebras with the same exceptional behaviour.

\end{document}